\numberwithin{equation}{section}
\newtheorem{theorem}{Theorem}[section]
\newtheorem{definition}[theorem]{Definition}
\newtheorem{proposition}[theorem]{Proposition}
\newtheorem{lemma}[theorem]{Lemma}
\newtheorem{remark}[theorem]{Remark}
\newtheorem{conjecture}[theorem]{Conjecture}
\newcommand{\cali}[1]{\mathscr{#1}}
\newcommand{\ddc}{dd^c}
\newcommand{\codim}{{\rm codim\ \!}}
\newcommand{\Cc}{\cali{C}}
\newcommand{\Dc}{\cali{D}}
\newcommand{\Hc}{\cali{H}}
\newcommand{\Oc}{\cali{O}}
\newcommand{\Uc}{\cali{U}}
\newcommand{\Vc}{\cali{V}}
\newcommand{\C}{\mathbb{C}}
\newcommand{\N}{\mathbb{N}}
\newcommand{\R}{\mathbb{R}}
\renewcommand\P{\mathbb{P}}
\newcommand{\wed}{\wedge}
\title{\bf  Equidistribution for non-pluripolar currents on compact K\"ahler manifolds}
\providecommand{\keywords}[1]{\textbf{\textit{Keywords:}} #1}
\providecommand{\subject}[1]{\textbf{\textit{Mathematics Subject Classification 2010:}} #1}
\author{Taeyong Ahn and Duc-Viet Vu}
\newcommand{\Addresses}{{
		\bigskip
		\footnotesize
		
		\noindent\textsc{Duc-Viet Vu, University of Cologne, Division of Mathematics, Department of Mathematics and Computer Science, Weyertal 86-90, 50931, K\"oln,  Germany.}
		
		\par\nopagebreak
		\noindent
		\textit{E-mail address}: \texttt{dvu@uni-koeln.de}
\newline
		
		\noindent\textsc{Taeyong Ahn, Department of Mathematics Education, Inha University, 100 Inha-ro, Michuhol-gu, Incheon 22212, Republic of Korea.}
		
		\par\nopagebreak
		\noindent
		\textit{E-mail address}: \texttt{t.ahn@inha.ac.kr}	
}}
\date{\today}
\begin{document}
\maketitle

\begin{abstract} Let $X$ be a compact K\"ahler manifold of complex dimension $k\ge 2$ and $f: X \to X$ a surjective holomorphic endomorphism of simple action on cohomology. We prove that the sequence of normalized pull-backs of a non-pluripolar current under iterates of $f$ converges to the Green current associated with $f$.
\end{abstract}
\noindent
\keywords Green currents, equidistribution, super-potentials of currents, non-pluripolar product.
\\

\noindent
\subject{32U15}, {32Q15}, {37F80}, {37D20}.


\section{Introduction}


Let $(X, \omega)$ be a compact K\"ahler manifold of complex dimension $k\ge 2$. Let $f:X\to X$ be a surjective holomorphic endomorphism of $X$. In complex dynamics, one is interested in the asymptotic behaviour of backward or forward orbits of analytic objects under iterates of $f$. These objects could be  points, analytic sets or more generally closed positive currents. While the dynamics of endomorphisms of complex projective spaces has been much studied, the corresponding theory for endomorphisms of compact K\"ahler manifolds has not yet been fully developed.

The aim of this paper is to address one of these {}{forementioned} problems by studying the backward orbits of closed {}{positive} currents under iterates of $f$. It is reasonable that one must impose some conditions on $f$ in order to have more concrete ideas what is going on. To go into details, let us first recall the notion of dynamical degrees.  

The dynamical degree $d_s$ of order $s$ of $f$ is the spectral radius of the pull-back operator $f^*$ acting on the Hodge cohomology group $H^{s, s}(X, \C)$ for $0\le s\le k$. It is known that $d_s$ itself is an eigenvalue of $f^*$ on $H^{s, s}(X, \C)$.  It is well-known  that the function $s\to \log d_s$ is concave for $0\le s\le k$ (\cite{DS06}, \cite{Gromov}). In particular, there are integers $p$ and $p'$ with $0\le p\le p'\le k$ such that
\begin{displaymath}
	d_0<\cdots<d_p=\cdots=d_{p'}>\cdots>d_k.
\end{displaymath}
It is known that $d_0=1$ and $d_k=\#f^{-1}(x)$ for generic $x\in X$. We call $d_k$ the topological degree of $f$ and $d_p$ the main dynamical degree of $f$. Note that $d_s$ is always an eigenvalue of $f^*$ on $H^{s,s}(X, \R)$.
We say that the action of $f$ on cohomology is \emph{simple} if $p=p'$, and $d_p$ is a simple eigenvalue and is the only eigenvalue of maximal modulus of $f^*$ on $H^{p, p}(X, \C)$.  As readers could quickly notice, the notion of simple action on cohomology is a cohomological version of the hyperbolicity of real dynamical systems. It is expected that such maps should have interesting dynamical properties. Much is known when $f$ is an automorphism with simple action on cohomology by Dinh-Sibony \cite{DS10,DS-green,DS-exponential} and Dinh-de Th\'elin \cite{DD}. One of the difficulties when passing from automorphisms to endomorphisms is that the inverse map  is multi-valued in general. 

In \cite{DNV}, it was proved that if $f$ is a holomorphic endomorphism with simple action on cohomology, then the limits $T^+_p:=\lim_{n\to\infty}d_p^{-n}(f^n)^*(\omega^p)$ and $T^-_p:=\lim_{n\to\infty}d_p^{-n}(f^n)_*(\omega^{k-p})$ exist in the sense of currents. These currents are called main dynamical Green currents. Our main theorem is as follows:

\begin{theorem}\label{thm:1st-main}
	Let $f:X\to X$ be a surjective holomorphic endomorphism of $X$. Assume that the action of $f$ on cohomology is simple with the main dynamical degree $d_p$. Let $R_1, \cdots, R_p$ be closed positive $(1, 1)$-currents. Then,
	\begin{align*}
		d_p^{-n}(f^n)^* \langle R_1\wed \cdots\wed R_p\rangle \to cT^+_p,\quad c=\dfrac{\{T^-_p\}\smile \{\langle R_1\wed \cdots\wed R_p\rangle\}}{ \{T^-_p\}\smile \{\omega^{p}\} }
	\end{align*}
	in the sense of currents where the bracket $\langle \cdot \rangle$ means the non-pluripolar product of currents, the bracket $\{\cdot\}$ means the de Rham cohomology class, to which the given closed current or form belongs.
\end{theorem}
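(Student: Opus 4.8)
Write $S:=\langle R_1\wed\cdots\wed R_p\rangle$; it is a closed positive $(p,p)$-current with a fixed class $\{S\}$, which in general is strictly smaller, in the pseudo-effective order, than $\{R_1\}\smile\cdots\smile\{R_p\}$ (for instance $\langle[D]\rangle=0$ for a divisor $D$). The first, cohomological, step pins down the limit class and the constant. Simplicity of the action of $f$ on $H^{p,p}(X,\C)$ means that $d_p^{-n}(f^n)^*$ on $H^{p,p}(X,\C)$ converges to the rank-one projector onto the $d_p$-eigenline $\C\{T^+_p\}$ along the complementary $f^*$-invariant subspace $W$ (on which the spectral radius is $<d_p$). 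Since $T^+_p=\lim_n d_p^{-n}(f^n)^*\omega^p$, we get $\{T^+_p\}\neq0$ and, because $(f^n)_*\{T^-_p\}=d_p^n\{T^-_p\}$, the identity $\{T^-_p\}\smile\{T^+_p\}=\{T^-_p\}\smile\{\omega^p\}$; moreover the functional $\{T^-_p\}\smile(\cdot)$ annihilates $W$. Hence $d_p^{-n}(f^n)^*\{S\}\to c\,\{T^+_p\}$ with $c$ exactly the constant of the statement. In particular the masses of $d_p^{-n}(f^n)^*S$ are uniformly bounded, so this sequence of closed positive currents is relatively compact; and any cluster value $S_\infty$ satisfies $\{S_\infty\}=c\,\{T^+_p\}$, since pairing a closed positive current with a closed smooth form is a cohomological operation. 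It remains to prove $S_\infty=c\,T^+_p$ for every cluster value.

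For this I would pass to super-potentials. A closed positive $(p,p)$-current $A$ is determined, within its cohomology class, by its super-potential $\Uc_A$, a function valued in $\R\cup\{-\infty\}$ on the test currents $\Theta$ of bidegree $(k-p+1,k-p+1)$ in a fixed class, and weak convergence of currents within converging classes corresponds to a suitable convergence of super-potentials. The transpose of $d_p^{-n}(f^n)^*$ is $d_p^{-n}(f^n)_*$, which on the test side is $\Lambda^n$ with $\Lambda:=d_p^{-1}f_*$; since $d_{p-1}<d_p$ (strictness of the main degree), $\Lambda$ has spectral radius bounded by $d_{p-1}/d_p<1$ on $H^{k-p+1,k-p+1}(X,\C)$, and in fact $\Lambda^n\Theta\to0$ as currents for every positive closed test current $\Theta$ (its mass decays to zero). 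Feeding this into the recursion — which expresses $\Uc_{d_p^{-n}(f^n)^*A}(\Theta)$ in the form $\Uc_A(\Lambda^n\Theta)$ up to explicit correction terms carrying the cohomology — the contraction forces $\Uc_{d_p^{-n}(f^n)^*S}$ to converge to $c\,\Uc_{T^+_p}$, i.e. $d_p^{-n}(f^n)^*S\to c\,T^+_p$; equivalently, every cluster value $S_\infty$ then has the same super-potential and the same class as $c\,T^+_p$, so $S_\infty=c\,T^+_p$, and the whole sequence converges.

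The crux — and the step I expect to be the main obstacle — is to make this recursion usable, that is, to control the super-potential $\Uc_S$ of the non-pluripolar product and, uniformly in $n$, those of its normalized pull-backs $d_p^{-n}(f^n)^*S$. Two difficulties compound: $(f^n)^*$ multiplies Lelong numbers along $f^{-n}$, so singularities can a priori worsen without bound under iteration; and $\Uc_S$ need not be finite everywhere, since a non-pluripolar product may still carry mass too concentrated near a pluripolar set. This is precisely where the hypothesis that $S$ is a non-pluripolar product of $(1,1)$-currents, rather than an arbitrary closed positive $(p,p)$-current, is indispensable: for $p\ge2$ there is no positivity-preserving regularization in bidegree $(p,p)$, so one cannot smooth $S$ directly, and one must instead bound $\Uc_S$ and its pull-backs through the potentials $u_i$ of the individual factors $R_i$ — using that $f$ is generically finite, that pull-back commutes with $\langle\cdot\rangle$, and a careful accounting of how the $u_i\circ f^n$ enter — and then run the contraction with these (possibly one-sided) bounds in hand. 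Once this uniform control is secured, the spectral-gap contraction and the identification of cluster values are routine.
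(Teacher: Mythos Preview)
Your cohomological step is fine, and the super-potential recursion is exactly how the paper handles the case of a current with \emph{bounded} superpotential (Theorem~\ref{thm:equi_bdd}): $\Uc_{(f^n)^*S_0}(R)=\Uc_{S_0}((f^n)_*R)$ together with $\|(f^n)_*R\|_*\lesssim (d')^n$ for some $d'<d_p$ gives exponential convergence once $\Uc_{S_0}$ is bounded on $*$-bounded sets. The gap is in what you yourself call the crux. You correctly note that $\Uc_S$ need not be finite everywhere; the paper itself exhibits a non-pluripolar product on $\P^k$ with strictly positive Lelong number along a codimension-$(p{+}1)$ subvariety, so $\Uc_S$ is genuinely unbounded and the recursion is unusable for $S$ as it stands. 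None of your proposed workarounds closes this: Proposition~\ref{prop:bounded_(1,1)} gives only $|\Uc|\lesssim\sup_j\|u_j\|_\infty$, which is vacuous when $u_j\notin L^\infty$; pull-back by a non-invertible $f$ is not known to commute with $\langle\cdot\rangle$, and the paper never invokes such a statement; and one-sided bounds on superpotentials yield only inequalities between cluster values, not convergence.

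The paper bypasses control of $\Uc_S$ altogether. Writing $R_j=\theta_j+\ddc u_j$ with $\theta_j$ K\"ahler, $\phi:=\sum_j u_j$ and $\phi_m:=m^{-1}\max\{\phi,-m\}$, it decomposes $S=(1+\phi_m)S+(-\phi_m)S$. Since $1+\phi_m$ vanishes on $\{\phi\le -m\}$, plurifine locality of the non-pluripolar product gives $(1+\phi_m)S=(1+\phi_m)\bigwedge_j(\ddc\max\{u_j,-m\}+\theta_j)$, a bounded quasi-psh function times a product with \emph{bounded} potentials, to which Theorem~\ref{thm:equi_bdd} applies. The piece $(-\phi_m)S$ is shown (Lemma~\ref{le-psinho}) to produce cluster values of mass tending to $0$ as $m\to\infty$, using that $S$ charges no mass on $\{\phi=-\infty\}$. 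The key idea you are missing is Proposition~\ref{pro-SphayS}: if $d_p^{-n}(f^n)^*A\to T^+_p$ for some $A$, then for any closed positive $S'\le A$ and any quasi-psh $0\le\psi\le1$ one has $d_p^{-n}(f^n)^*(\psi S')\to c'T^+_p$. This is proved \emph{not} through superpotential bounds on $\psi S'$ but by a domination argument: every cluster value is closed, dominated by $T^+_p$, has class in $L^+$, and the set of cluster values is mapped onto itself by $d_p^{-1}f^*$; Lemma~\ref{le-lemmaFhobatbien} then forces each to be a scalar multiple of $T^+_p$, identified by pairing against $T^-_p$. This domination/invariance device is what replaces the direct super-potential control of $S$ that your outline seeks but cannot obtain.
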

When $X$ is projective space, it was proved for the non-pluripolar product of Cegrell class with an integrability condition for all bidegrees in \cite{AN}. Hence, the non-pluripolar product in \cite{AN} should have zero Lelong number everywhere. We note here that the current $\langle R_1\wed \cdots\wed R_p\rangle $ may have strictly positive Lelong number along an analytic subset of dimension $k-p-1$ in $X$. For example, consider $X= \P^k$ with homogeneous coordinates $z= [z_0:z_1: \cdots:z_k]$, and $R_j:= \ddc \log (|z_0|^2+ \cdots+ |z_p|^2)$ for $1 \le j \le p$. We see that the intersection $R:=R_1 \wedge \cdots \wedge R_p$ is classically well-defined because the unbounded locus of $R_j$'s is $V:=\{z_0= \cdots =z_p=0\}$ of dimension $k-p-1$. Moreover $R$ has no mass on $V$ and is smooth outside $V$. Hence $R= \langle R_1 \wedge \cdots \wedge R_p \rangle$ which has positive Lelong number along $V$ (\cite{Demailly_ag}). 

If $f$ is an automorphism with simple action on cohomology, then Dinh-de Th\'elin proved a much stronger equidistribution toward the Green current for every ``compact piece'' of a (locally) closed positive $(p,p)$-current. We refer to  \cite[Corollary 4.2]{DD} for a precise statement. As far as we {}{understand}, the proof of this result uses the fact that $f$ is an automoprhism in a crucial way.

Dinh-Sibony \cite{DS08} and Taflin \cite{T} showed that for every non-invertible self-map $f$ on the projective space $\P^k$ if quasi-potentials of a closed positive $(1,1)$-current $R$ of unit mass are locally integrable along the maximally invariant set $E$ of $f$ and $\lambda$ is  the algebraic degree of $f$, then $\lambda^{-n} (f^n)^* S$ converges to the Green current $T$ of $f$ ; see also \cite{Ahn16} for more information. We also note that  Guedj \cite{G} proved that if  $R$ a closed positive $(1, 1)$-current having zero Lelong number everywhere, then we also get the equidistribution $\lambda^{-n} (f^n)^* S \to T$. It is a good time to recall the following conjecture by Dinh-Sibony in \cite{DS08}:
\begin{conjecture}[Conjecture 1.4 in ~\cite{DS08}]\label{conj:dinh-sibony} Let $0\le s\le k$ be an integer.
	Let $f:\P^k\to\P^k$ be a holomorphic endomorphism of degree $d\geq 2$ and $T$ its Green $(1, 1)$-current. Then $d^{-sn}(f^n)^*[H]$ converges to $c_HT^s$ for every analytic subset $H$ of $\P^k$ of pure dimension $k-s$ and of degree $c_H$ which is generic. Here, $H$ is generic if either $H\cap \mathcal{E}=\emptyset$ or $\codim H\cap \mathcal{E}=s+\codim\, \mathcal{E}$ for any irreducible component $\mathcal{E}$ of every totally invariant analytic subset of $\P^k$ and $[H]$ denotes the current of integration on $H$.
\end{conjecture}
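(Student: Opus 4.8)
The plan is to normalize the sequence, extract weak limits by compactness, and then control the relevant potentials, isolating at each step where the genericity hypothesis is used. Set $S_n:=d^{-sn}(f^n)^*[H]$. Since $H^{s,s}(\P^k,\C)$ is generated by $\{\omega^s\}$ (with $\omega$ the Fubini--Study form) and $f^*$ acts on it by multiplication by $d^s$, the class of $[H]$ equals $c_H\{\omega^s\}$, so every $S_n$ lies in the fixed class $c_H\{\omega^s\}$. Thus $(S_n)$ is a family of closed positive $(s,s)$-currents of bounded mass, relatively compact for the weak topology, and $c_HT^s$ lies in the same class; it therefore suffices to show that every weak limit of $(S_n)$ equals $c_HT^s$. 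The degree $s=1$ is exactly the equidistribution of $(1,1)$-currents recalled above (Guedj; Dinh--Sibony; Taflin), whose hypothesis, local integrability of quasi-potentials along the maximally invariant set, is precisely the codimension-one shadow of the stated genericity. The top degree $s=k$, where $H$ is a generic point, is the classical equidistribution of preimages of a point toward the equilibrium measure. The genuinely open range is $2\le s\le k-1$, on which I would concentrate.

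As a first, more tractable target I would treat the complete intersection case, which reduces the problem to the already available $s=1$ statement. Write $H=D_1\cap\cdots\cap D_s$ with $D_j$ hypersurfaces of degree $e_j$ meeting properly, so that $[H]=[D_1]\wed\cdots\wed[D_s]$ as an intersection of integration currents and $c_H=e_1\cdots e_s$ by B\'ezout. Provided the preimage divisors continue to meet properly, pull-back commutes with this intersection, whence $S_n=\prod_{j=1}^s d^{-n}(f^n)^*[D_j]$; each factor converges, $d^{-n}(f^n)^*[D_j]\to e_jT$, by the $s=1$ result under the genericity of $D_j$ relative to $\mathcal{E}$. The remaining task is to pass to the limit in the product, i.e. to establish continuity of the wedge product along this sequence. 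Writing $d^{-n}(f^n)^*[D_j]=e_jT+\ddc\psi_n^{(j)}$, the factors converge in $L^1$; the point is to upgrade this to convergence in capacity of the quasi-potentials $\psi_n^{(j)}$ toward the continuous potential of $e_jT$, after which Bedford--Taylor type continuity of the Monge--Ampère operator yields $S_n\to(e_1\cdots e_s)T^s=c_HT^s$. Here genericity enters as the no-escape-of-mass statement ruling out concentration of $\psi_n^{(j)}$ on the totally invariant set.

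For a general, non complete-intersection $H$ I would pass to the super-potential calculus in bidegree $s$. Attach to each current $S$ in the class $c_H\{\omega^s\}$ its super-potential $\Uc_S$, a function on closed positive $(k-s+1,k-s+1)$-currents in the dual class, and note that $T^s$ is the unique fixed point of the normalized pull-back $d^{-s}f^*$ because $T^s$ has continuous potentials. The decisive structural feature is a contraction: since $d^{-s}f^*$ is matched on the dual side by $d^{-(s-1)}f_*$, the normalization keeping $(k-s+1,k-s+1)$-classes fixed, one step of the dynamics contributes a factor $d^{-s}\cdot d^{s-1}=d^{-1}<1$ to the super-potential. Unwinding this, $\Uc_{S_n}(R')$ equals $d^{-n}\Uc_{[H]}$ evaluated along the orbit $(d^{-(s-1)}f_*)^nR'$ of a dual test current, plus a cohomological term that converges to the super-potential of $c_HT^s$. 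The convergence $S_n\to c_HT^s$ would then follow once one knows that these super-potential values remain bounded, i.e. do not grow like $d^n$, as $(d^{-(s-1)}f_*)^nR'$ approaches the limiting dual current whose singular part is carried by the totally invariant sets. This boundedness is exactly the analytic content of the genericity condition $H\cap\mathcal{E}=\emptyset$ or $\codim(H\cap\mathcal{E})=s+\codim\mathcal{E}$: it forces $[H]$ to have finite super-potential at the relevant limiting test currents, equivalently it prevents mass of $S_n$ from escaping into $\mathcal{E}$.

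The main obstacle, and the reason the statement remains a conjecture, is the rigorous super-potential calculus for integration currents. Unlike the diffuse currents handled by non-pluripolar products, $[H]$ charges its support and is not of bounded super-potential: its super-potential is finite at many test currents but $-\infty$ at others, so the clean contraction above must be run in a setting where the calculus, its continuity under $f^*$, and its behaviour under weak limits are only partially available. In bidegree $s\ge2$ one moreover lacks a regularization of $[H]$ by smooth positive closed forms compatible with the dynamics, which is precisely the tool making the $(1,1)$ and complete-intersection arguments work. Converting the genericity hypothesis into the required quantitative no-escape-of-mass estimate, uniformly along the orbit of dual test currents, is the heart of the difficulty and the step I expect to be genuinely hard.
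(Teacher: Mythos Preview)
The statement you were asked to prove is not a theorem of the paper but a \emph{conjecture} (Dinh--Sibony, Conjecture~1.4 in \cite{DS08}), quoted here as motivation. The paper offers no proof of it; immediately after stating the conjecture the authors write that it ``has been solved only for $s=1$ in \cite{DS08,T}, for $s=k$ in \cite{DS10-1} and for $1<s<k$, partially (in the case of $H\cap\mathcal{E}=\emptyset$) in \cite{Ahn16}.'' So there is nothing in the paper to compare your proposal against.

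To your credit, you recognize this: your final paragraph explicitly calls the statement a conjecture and identifies the core obstruction. What you have written is not a proof but a coherent survey of the known partial results ($s=1$, $s=k$, and the complete-intersection reduction under genericity) together with an outline of the super-potential strategy and where it breaks down. That outline is accurate as far as it goes, and your diagnosis of the difficulty --- that integration currents $[H]$ are not of bounded super-potential in bidegree $s\ge 2$, that no dynamically compatible regularization is available, and that the genericity hypothesis has not been converted into a uniform no-escape-of-mass estimate along the dual orbit --- matches the understood state of the problem. But none of your paragraphs closes the gap: in the complete-intersection case you assume without justification that preimage divisors continue to meet properly and that convergence in capacity of the quasi-potentials holds, and in the general case your contraction argument requires exactly the boundedness you concede is unavailable. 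So the proposal is an honest strategy sketch, not a proof, and the paper itself makes no claim to have one.
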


This conjecture has been solved only for $s=1$ in \cite{DS08,T}, for $s=k$ in \cite{DS10-1} and for $1<s<k$, partially { (in the case of $H\cap\mathcal{E}=\emptyset$)  in \cite{Ahn16} as cited above}. Theorem \ref{thm:1st-main} provides information in this vein of research.
\\

\noindent
\textbf{Acknowledgments.}  The research of T. Ahn was supported by the National Research Foundation of Korea
(NRF) grant funded by the Korea government (MSIT) (No. RS-2023-00250685).
The research of D.-V. Vu is partially funded by the Deutsche Forschungsgemeinschaft (DFG, German Research Foundation)-Projektnummer 500055552 and by the ANR-DFG grant QuaSiDy, grant no ANR-21-CE40-0016.

\section{Superpotentials}

In this section, we briefly recall the definition of superpotentials on a compact K\"ahler manifold and related properties that we will use. For the detail of superpotentials and continuous/bounded superpotentials, see \cite{DS10}. In particular, for projective space, see \cite{DS09}.

Let $(X, \omega)$ be a compact K\"ahler manifold of complex dimension $k\ge 2$. Let $1\le s\le k$ be an integer. For simplicity, we define the mass of a positive or negative current $S$ of bidegree $(s, s)$ to be $\|S\|:=|\langle S, \omega^{k-s}\rangle|$. Let $\Cc_s$ denote the space of closed positive $(s, s)$-currents on $X$ and $\Dc_s$ the real vector space spanned by $\Cc_s$. Define the $*$-norm on $\Dc_s$ by $\|R\|_*:=\min(\|R^+\|+\|R^-\|)$, where $R^\pm$ are currents in $\Cc_s$ such that $R=R^+-R^-$. The topology on $\Dc_s$ is as follows: a sequence $(R_n)$ in $\Dc_s$ converges to a current $R\in\Dc_s$ in $\Dc_s$ if $R_n\to R$ in the sense of currents and if $\|R_n\|_*$ is bounded independently of $n$. We will sometimes call this topology the $*$-topology. Let $\widetilde{\Dc}_s$ the subspace of smooth ones in $\Dc_s$. Let $\Dc_s^0$ and $\widetilde{\Dc}^0_s$ denote the subspaces of currents in $\Dc_s$ and $\widetilde{\Dc_s}$ whose cohomology class is $0$, respectively. Observe that on any $*$-bounded subset of $\Dc_s$, the $*$-topology is just the subspace topology inherited from the classical weak topology on the space of currents.
\medskip

\begin{definition}
	Let $1\le s\le k$ be an integer. Let $S_0$ be a current in $\Dc^0_s$. The superpotential $\Uc_{S_0}$ of $S_0$ is the following function defined for smooth currents $R_0$ in $\widetilde{\Dc}^0_{k-s+1}$ by
	\begin{displaymath}
		\Uc_{S_0}(R_0):=\langle S_0, U_{R_0}\rangle=\langle U_{S_0}, R_0\rangle
	\end{displaymath}
	where $U_{R_0}$ is a smooth $(k-s, k-s)$-current $U_{R_0}$ such that $dd^c U_{R_0}=R_0$, and $U_{S_0}$ is a $(s-1, s-1)$-current such that $dd^c U_{S_0}=S_0$.
\end{definition}

In this article, for a current $S\in \Dc_s$, a superpotential of $S$ means the superpotential of $S-\alpha_S$ where $\alpha_S$ is a smooth closed $(s, s)$-form which belongs to the same cohomology class as $S$. The properties that we will discuss about the superpotential of $S$ are independent of the choice of $\alpha_S$. Otherwise, we will clearly specify it.

\begin{definition}
	We say that $S_0\in\Dc_{s}^0$ has a continuous superpotential if $\Uc_{S_0}$ can be extended to a function on $\Dc^0_{k-s+1}$ which is continuous with respect to the $*$-topology. In this case, the extension is also denoted by $\Uc_{S_0}$ and is also called the superpotential of $S_0$. The current $S_0$ is said to have a bounded superpotential if there exists a constant $M>0$ such that for every $R_0\in\widetilde{\Dc}^0_{k-s+1}$,
	\begin{align*}
		|\Uc_{S_0}(R_0)|\le M\|R_0\|_*.
	\end{align*}
\end{definition}

Now, we consider the operators $f^*$ and $f_*$ on the space of $\Dc_s$ for $0\le s\le k$. Because of \cite[Lemma 4.7]{DNV}, the properties for $f^*$ proved in \cite{DS10} works for $f_*$ as well, and also the other way around in our case.
\begin{lemma}[{\cite[Lemma 4.7]{DNV}}]\label{lem:covering}
	Let $f$ be a surjective holomorphic map from $X$ to $X$. Then $f^{-1}$ is a holomorphic correspondence.
\end{lemma}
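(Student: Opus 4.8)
The plan is to realize both $f$ and its inverse through the single graph of $f$ and to check directly that this graph, read with its two projections in the opposite order, satisfies the axioms of a holomorphic correspondence. Recall that a holomorphic correspondence on $X$ is given by a compact analytic subset $\Gamma\subset X\times X$ of pure dimension $k=\dim_\C X$ whose two projections $\pi_1,\pi_2\colon \Gamma\to X$ are both surjective; such a $\Gamma$ encodes the (multivalued) assignment $x\mapsto \pi_2(\pi_1^{-1}(x))$, and the induced operators on forms and currents are $(\pi_1)_*\circ \pi_2^*$ and $(\pi_2)_*\circ \pi_1^*$.

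First I would take $\Gamma:=\Gamma_f=\{(x,f(x)):x\in X\}\subset X\times X$, the graph of $f$. Since $f$ is holomorphic, $\Gamma_f$ is a smooth closed analytic subset, and the restriction $\pi_1|_{\Gamma_f}$ is a biholomorphism onto $X$ with inverse $x\mapsto(x,f(x))$; in particular $\Gamma_f$ is irreducible, compact, and of pure dimension $k$. Under this identification $\Gamma_f\cong X$ the second projection becomes $\pi_2|_{\Gamma_f}=f$. The correspondence attached to $f$ itself is $(\Gamma_f,\pi_1,\pi_2)$, for which $(\pi_1)_*\circ\pi_2^*$ reduces to the usual $f^*$ and $(\pi_2)_*\circ\pi_1^*$ to $f_*$.

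Next I would define the inverse $f^{-1}$ by the same analytic set with the two projections exchanged, that is, by the correspondence $(\Gamma_f,\pi_2,\pi_1)$, equivalently by the graph $\sigma(\Gamma_f)=\{(f(x),x):x\in X\}$, where $\sigma(x,y)=(y,x)$ is the biholomorphic swap of the two factors. To verify that this is a legitimate holomorphic correspondence it remains to check its structural requirements. Compactness of $\Gamma_f$, a closed subset of the compact $X\times X$, makes both projections proper. The projection playing the role of the source map for $f^{-1}$ is $\pi_2|_{\Gamma_f}=f$, which is surjective precisely because $f$ is assumed surjective; the target projection $\pi_1|_{\Gamma_f}$ is a biholomorphism and hence trivially surjective. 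Both maps are moreover generically finite: $\pi_1|_{\Gamma_f}$ is an isomorphism, while $\pi_2|_{\Gamma_f}=f$ is a surjective proper map between compact manifolds of equal dimension $k$, so its generic fibre is zero-dimensional. This exhibits $(\Gamma_f,\pi_2,\pi_1)$ as a holomorphic correspondence representing $f^{-1}$, and reading off the induced operators gives $(f^{-1})^*=(\pi_2)_*\circ\pi_1^*=f_*$ and $(f^{-1})_*=f^*$.

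The only genuinely delicate point is that $f$ need not be a finite map: although $\dim\Gamma_f=k$ forces $\pi_2=f$ to be generically finite, $f$ could a priori contract a proper analytic subset and thus have isolated fibres of positive dimension, so one must phrase the correspondence axioms in terms of generic finiteness together with properness rather than finiteness everywhere. I expect this to be the main obstacle to a clean formulation, but it is harmless here: because $\Gamma_f$ is already smooth (no normalization is needed) and $\sigma$ is a global biholomorphism of $X\times X$, the pullback and pushforward operators attached to the inverse correspondence are defined by exactly the same recipe as those attached to $f$, merely with the two factors interchanged. Consequently every functorial property that \cite{DS10} establishes for the pullback of a correspondence applies verbatim to $(f^{-1})^*=f_*$, which is the use to which the lemma is put.
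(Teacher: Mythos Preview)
The paper does not give its own proof of this lemma; it is stated with a citation to \cite[Lemma~4.7]{DNV} and used only to justify that the pull-back/push-forward formalism of \cite{DS10} applies symmetrically to $f$ and $f^{-1}$. Your argument is correct and is the standard one (and, as far as one can tell, the same as in \cite{DNV}): the graph $\Gamma_f$ is smooth, irreducible, compact, of pure dimension $k$, and under the swap $\sigma$ the two projections become $(\pi_2,\pi_1)=(f,\id)$, both surjective---the first precisely by the surjectivity hypothesis on $f$---with no component of $\Gamma_f$ lying in a fibre. That is exactly what is needed for $(\Gamma_f,\pi_2,\pi_1)$ to define a holomorphic correspondence in the sense of \cite{DS10}, and your identification $(f^{-1})^*=f_*$, $(f^{-1})_*=f^*$ is the content the paper actually uses.
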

In general, it may not be true because $f^{-1}$ may fail to be a holomorphic correspondence for a holomorphic correspondence $f$.
\medskip

Below are some properties that we will use later.
\begin{proposition}[See {\cite[Proposition 4.1]{DNV}}]
	Let $f$ be a surjective holomorphic map from $X$ to $X$. Let $0\le s\le k$ be an integer. Then, the operators $f^*$ and $f_*$, which are well-defined on $\widetilde{\Dc}_s$, extend continuously to linear operators from $\Dc_s$ to $\Dc_s$. Moreover, they preserve the subspace $\Dc_s^0$ and the cone $\Cc_s$. For $T\in\Dc_s$, we have $f^*\{T\}=\{f^*(T)\}$ and $f_*\{T\}=\{f_*(T)\}$. We also have $(f^n)^*=(f^*)^n$ and $(f^n)_*=(f_*)^n$ on $\Dc_s$ and $H^{s, s}(X, \R)$.
\end{proposition}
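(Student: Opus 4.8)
The plan is to reduce everything to the pull-back operator $f^*$. For $f^*$ the statement is precisely the content of the theory developed by Dinh--Sibony in \cite{DS10}; for $f_*$ I would invoke Lemma~\ref{lem:covering}: since $f^{-1}$ is a holomorphic correspondence, $f_*$ is nothing but the pull-back under $f^{-1}$, and the whole formalism of \cite{DS10} is set up for holomorphic correspondences, so every assertion proved there for $f^*$ transfers verbatim to $f_*$. It thus suffices to explain the argument for $f^*$. On the smooth forms $\widetilde{\Dc}_s$ there is essentially nothing to do: $f^*$ is the ordinary pull-back of forms, which is holomorphic, hence preserves (strong) positivity of $(s,s)$-forms and so the cone of positive forms; at the level of de Rham cohomology it coincides with the fixed linear endomorphism of $H^{s,s}(X,\R)$, so $\{f^*\alpha\}=f^*\{\alpha\}$ and $(f^n)^*=(f^*)^n$ already hold on $\widetilde{\Dc}_s$. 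The real issue is the continuous extension to all of $\Dc_s$.

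The extension rests on the uniform estimate $\|f^*S\|_*\le C\|S\|_*$ with $C=C(f,\omega,s)$, and I would derive this as a soft consequence of finite-dimensionality of cohomology, once one knows that $f^*$ sends closed positive currents to closed positive currents (the positivity of $f^*\beta$ being part of the construction recalled below). For a closed positive $(s,s)$-current $\beta$ one has $\|\beta\|=\langle\beta,\omega^{k-s}\rangle=\{\beta\}\smile\{\omega^{k-s}\}$, which depends only on $\{\beta\}$; the classes of closed positive $(s,s)$-currents fill a closed salient convex cone in $H^{s,s}(X,\R)$ on which $c\mapsto c\smile\{\omega^{k-s}\}$ is a positive functional, so the corresponding unit slice of that cone is compact. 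Since $f^*$ maps the cone into itself, it carries this slice into a bounded set, which yields $\|f^*\beta\|\le C\|\beta\|$. Writing an arbitrary $S\in\Dc_s$ as $S=S^+-S^-$ with $S^\pm$ closed positive and $\|S^+\|+\|S^-\|$ as close as desired to $\|S\|_*$, and using $f^*S=f^*S^+-f^*S^-$, gives $\|f^*S\|_*\le C\|S\|_*$; the same runs for $f_*$.

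Finally, for the definition of $f^*S$ on $\Dc_s$ and its continuity I would follow \cite{DS10}: define $f^*S$ either through the graph $\Gamma_f\subset X\times X$ as $(\pi_1)_*\big(\pi_2^*S\wedge[\Gamma_f]\big)$, or, equivalently, by prescribing that $f^*S$ lie in the class $f^*\{S\}$ and have a super-potential controlled by that of $S$ through the operator $f_*$ on the dual side --- which is meaningful precisely because, by Lemma~\ref{lem:covering}, $f^{-1}$ is a holomorphic correspondence, so $f_*$ maps $\widetilde{\Dc}^0_{k-s+1}$ into $\Dc^0_{k-s+1}$ and is itself $*$-continuous. One then checks that this extends the classical $f^*$ on smooth forms, preserves $\Cc_s$ (the current $\pi_2^*S\wedge[\Gamma_f]$ is closed positive and push-forward preserves positivity), and, because $\{f^*S\}=f^*\{S\}$, preserves $\Dc^0_s$; functoriality $(f^n)^*=(f^*)^n$ then follows from the uniqueness built into the construction. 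With the uniform bound of the previous paragraph in hand, and recalling that on $*$-bounded subsets of $\Dc_s$ the $*$-topology is the weak topology, continuity of $f^*\colon\Dc_s\to\Dc_s$ comes from a compactness argument: if $S_n\to S$ in $\Dc_s$ then the $f^*S_n$ are $*$-bounded, hence relatively compact, all limit points lie in the fixed class $f^*\{S\}$, and the super-potential description pins the limit down uniquely. The one genuinely delicate ingredient --- and the reason one cannot simply ``extend by weak continuity from smooth forms'', since one would have to pair a weakly convergent sequence of smooth forms against the singular $L^1$-form $f_*U_{R_0}$ --- is exactly the construction of $f^*S$ for non-smooth $S$ and the verification of its continuity; this is the part I would import wholesale from \cite{DS10}, where the super-potential formalism and the symmetry between $f^*$ and $f_*$ provided here by Lemma~\ref{lem:covering} are what make it work.
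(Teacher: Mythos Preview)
The paper does not give its own proof of this proposition: it is stated with the attribution ``See \cite[Proposition 4.1]{DNV}'' and is simply imported from that reference. Your sketch --- reducing $f_*$ to the pull-back by the holomorphic correspondence $f^{-1}$ via Lemma~\ref{lem:covering}, and then invoking the Dinh--Sibony super-potential machinery from \cite{DS10} for the extension, continuity, positivity, and cohomological compatibility of $f^*$ --- is exactly the strategy behind the result in \cite{DNV}, so there is nothing to correct.
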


Let $\Cc_s^c$ denote the cone of closed positive $(p, p)$-currents with continuous superpotentials and $\Dc_s^c$ the vector subspace of $\Dc_s$ spanned by $\Cc_s^c$. We define $\Dc_s^{0, c}$ to be the intersection $\Dc_s^{0, c}:=\Dc_s^0\cap \Dc_s^c$.
\begin{proposition}
	[See {\cite[Proposition 4.4]{DNV}}]\label{prop:img_conti_suppot} The operators $f^*$ and $f_*$ preserve the subspaces $\Dc_s^c$ and $\Dc_s^{0, c}$, and the cone $\Cc_s^c$. Let $S_0\in\Dc_s^{0, c}$ be a current, and $\Uc_{S_0}$ and $\Uc_{f^*(S_0)}$ the superpotentials of $S_0$ and $f^*(S_0)$, respectively. Then, $\Uc_{f^*(S_0)}(R)=\Uc_{S_0}(f_*(R))$ and $\Uc_{f_*(S_0)}(R)=\Uc_{S_0}(f^*(R))$ for $R\in\Dc_{k-s+1}^0$.
\end{proposition}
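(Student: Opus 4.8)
The plan is to prove the $f_{*}$-commutation identity first, by a direct computation at smooth test currents, then to bootstrap the $f^{*}$-identity from it via the symmetry of superpotentials, and finally to read off the three preservation statements. Throughout I would use that $f^{*}$ and $f_{*}$ send $\Dc_{m}^{0}$ into itself (since $f^{*}\{T\}=\{f^{*}T\}$, $f_{*}\{T\}=\{f_{*}T\}$), preserve positivity and are $*$-continuous on each $\Dc_{m}$ (the Proposition recalled above), together with the fact that $\widetilde{\Dc}_{m}^{0}$ is $*$-dense in $\Dc_{m}^{0}$; consequently, any $*$-continuous function on $\Dc_{m}^{0}$ that agrees with a superpotential on smooth test currents \emph{is} the continuous extension of that superpotential.

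For the $f_{*}$-identity, take $S_{0}\in\Dc_{s}^{0}$ and a smooth $R_{0}\in\widetilde{\Dc}_{k-s+1}^{0}$ with a smooth quasi-potential $U_{R_{0}}$. Then $f^{*}U_{R_{0}}$ is smooth (pull-back of a smooth form by a holomorphic map) and $dd^{c}f^{*}U_{R_{0}}=f^{*}R_{0}$, so it is a smooth quasi-potential of the smooth current $f^{*}R_{0}\in\widetilde{\Dc}_{k-s+1}^{0}$; hence, using $f_{*}S_{0}\in\Dc_{s}^{0}$ and the adjointness of $f_{*}$ and $f^{*}$ against smooth forms,
\[
\Uc_{f_{*}S_{0}}(R_{0})=\langle f_{*}S_{0},U_{R_{0}}\rangle=\langle S_{0},f^{*}U_{R_{0}}\rangle=\Uc_{S_{0}}(f^{*}R_{0}).
\]
(Every pairing here is independent of the chosen smooth quasi-potential: two of them differ by a smooth $dd^{c}$-closed form, which pairs to zero against a class-zero current.) When $S_{0}\in\Dc_{s}^{0,c}$, the right-hand side, as a function of $R$, is the composite $\Uc_{S_{0}}\circ f^{*}$ of $*$-continuous maps on $\Dc_{k-s+1}^{0}$, hence $*$-continuous; since it coincides with $\Uc_{f_{*}S_{0}}$ on the $*$-dense set $\widetilde{\Dc}_{k-s+1}^{0}$, it is the continuous superpotential of $f_{*}S_{0}$. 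This gives $f_{*}S_{0}\in\Dc_{s}^{0,c}$ and $\Uc_{f_{*}S_{0}}(R)=\Uc_{S_{0}}(f^{*}R)$ for all $R\in\Dc_{k-s+1}^{0}$.

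For the $f^{*}$-identity, fix $S_{0}\in\Dc_{s}^{0,c}$ and a smooth $R\in\widetilde{\Dc}_{k-s+1}^{0}$. Since $R$ is smooth, $\Uc_{f^{*}S_{0}}(R)=\langle f^{*}S_{0},U_{R}\rangle=\Uc_{R}(f^{*}S_{0})$. Applying the already-proved $f_{*}$-statement to $R$ in place of $S_{0}$ (valid since $R\in\Dc_{k-s+1}^{0,c}$) gives $f_{*}R\in\Dc_{k-s+1}^{0,c}$, and evaluating the identity $\Uc_{f_{*}R}=\Uc_{R}\circ f^{*}$ at $S_{0}$ gives $\Uc_{R}(f^{*}S_{0})=\Uc_{f_{*}R}(S_{0})$. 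As $f_{*}R$ and $S_{0}$ both have continuous superpotentials in complementary bidegrees, the symmetry of superpotentials from \cite{DS10} gives $\Uc_{f_{*}R}(S_{0})=\Uc_{S_{0}}(f_{*}R)$. Combining these, $\Uc_{f^{*}S_{0}}(R)=\Uc_{S_{0}}(f_{*}R)$ for all smooth $R$; since the right-hand side is the $*$-continuous map $\Uc_{S_{0}}\circ f_{*}$, the density argument again gives $f^{*}S_{0}\in\Dc_{s}^{0,c}$ and $\Uc_{f^{*}S_{0}}(R)=\Uc_{S_{0}}(f_{*}R)$ for all $R\in\Dc_{k-s+1}^{0}$.

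This already yields both displayed identities and the preservation of $\Dc_{s}^{0,c}$; the preservation of $\Cc_{s}^{c}$ then follows routinely by splitting $S\in\Cc_{s}^{c}$ as $(S-\alpha_{S})+\alpha_{S}$ with $S-\alpha_{S}\in\Dc_{s}^{0,c}$ and $\alpha_{S}$ a smooth closed representative of $\{S\}$, applying the above to $S-\alpha_{S}$, and using that $f^{*},f_{*}$ preserve $\Cc_{s}$ and send smooth forms to currents with continuous superpotentials; the preservation of $\Dc_{s}^{c}$, being the linear span of $\Cc_{s}^{c}$, is then immediate. I expect the one genuinely non-formal ingredient to be the symmetry $\Uc_{A}(B)=\Uc_{B}(A)$ for currents $A,B$ with continuous superpotentials in complementary bidegrees used in the $f^{*}$-step --- equivalently, the continuity of $T\mapsto\Uc_{T}$ for the $*$-topology, evaluated at the possibly singular current $f_{*}R$ --- which is part of the superpotential calculus developed in \cite{DS10} and whose proof rests on the structure of positive closed currents on compact K\"ahler manifolds; a related subtlety is that $f_{*}$ of a smooth form is no longer smooth, so to quote that calculus with $f_{*}$ in place of a pull-back one invokes Lemma~\ref{lem:covering} --- that $f^{-1}$ is a holomorphic correspondence --- which is exactly what lets the results of \cite{DS10} for pull-backs be transported to $f_{*}$, using that $f$ is a finite map.
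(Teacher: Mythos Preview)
The paper does not prove this proposition at all: it is quoted verbatim from \cite[Proposition 4.4]{DNV} with no argument supplied, so there is no ``paper's own proof'' to compare against. Your reconstruction is correct and is essentially the argument one expects from \cite{DNV,DS10}: you exploit the asymmetry that $f^{*}$ preserves smoothness while $f_{*}$ does not, so the $f_{*}$-identity follows by a direct computation with smooth quasi-potentials and $*$-density, whereas the $f^{*}$-identity must be routed through the already-established $f_{*}$-case together with the symmetry $\Uc_{A}(B)=\Uc_{B}(A)$ for currents with continuous superpotentials. You also correctly flag the two non-formal inputs --- that symmetry (from \cite{DS10}) and Lemma~\ref{lem:covering} ensuring the \cite{DS10} machinery applies to $f_{*}$ --- which is exactly how the surrounding text of the paper justifies quoting \cite{DNV} here.
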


\begin{remark}
	[See {\cite[Remark 4.5]{DNV}}] Let $S_0\in \Dc_s^0$ and $R\in \Dc_{k-s+1}^0$ be currents, and $\Uc_{S_0}$ and $\Uc_R$ their superpotentials respectively. When $R$ is smooth, we have $\Uc_{S_0}(R)=\Uc_R(S_0)$. Therefore, we can extend $\Uc_{S_0}$ to $\Dc_{k-s+1}^{0, c}$ by setting $\Uc_{S_0}(R):=\Uc_R(S_0)$. (One may also give more precise arguments using the notion of the SP-convergence in \cite{DS10}.) Then, from the above proposition, we have $\Uc_{f^*(S_0)}(R)=\Uc_{S_0}(f_*(R))$ and $\Uc_{f_*(S_0)}(R)=\Uc_{S_0}(f^*(R))$ for $R\in \Dc_{k-s+1}^{0, c}$.
\end{remark}

It is not difficult to deduce the following proposition from \cite[Lemma 2.4.3]{DS10} and \cite[Proposition 3.2.8]{DS10} together with the above remark.
\begin{proposition}\label{prop:ext_to_conti}
	Let $S_0$ be a current in $\Dc^0_s$ with a bounded superpotential $\Uc_{S_0}$ such that there exists $M>0$ satisfying
	\begin{align*}
		|\Uc_{S_0}(R_0)|\le M\|R_0\|_*
	\end{align*}
	for every $R_0\in\widetilde{\Dc}^0_{k-s+1}$. Then, there exists a constant $c>0$ independent of $S_0$ such that
	\begin{align*}
		|\Uc_{S_0}(R')|\le cM\|R'\|_*\quad \textrm{ for }R'\in \Dc^{0,c}_{k-s+1}.
	\end{align*}
	In particular, we have
	\begin{align*}
		|\Uc_{f^*(S_0)}(R')|\le cM\|f_*(R')\|_*\quad \textrm{ for }R'\in \Dc^{0,c}_{k-s+1}
	\end{align*}
\end{proposition}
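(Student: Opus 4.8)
The plan is to deduce the inequality on general currents $R'\in\Dc^{0,c}_{k-s+1}$ from the assumed inequality on \emph{smooth} test currents by a regularization argument. Recall first, from the remark recalled above, that the superpotential $\Uc_{S_0}$ — defined a priori only on $\widetilde{\Dc}^0_{k-s+1}$ — is extended to $\Dc^{0,c}_{k-s+1}$ via $\Uc_{S_0}(R'):=\Uc_{R'}(S_0)$, where $\Uc_{R'}$ denotes the superpotential of $R'$; since $R'$ has a continuous superpotential, $\Uc_{R'}$ is a $*$-continuous function on $\Dc^0_s$, so $\Uc_{R'}(S_0)$ is well defined for $S_0\in\Dc^0_s$. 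Thus the quantity to be estimated is $\Uc_{R'}(S_0)$, and the idea is to write $R'$ as a $*$-limit of smooth currents $Q_n\in\widetilde{\Dc}^0_{k-s+1}$ with $\|Q_n\|_*$ controlled by $\|R'\|_*$, to apply the hypothesis to each $Q_n$, and to pass to the limit.

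Concretely, I would fix $\eta>0$ and, using the definition of the $*$-norm, choose closed positive currents $A,B\in\Cc_{k-s+1}$ with $R'=A-B$ and $\|A\|+\|B\|<\|R'\|_*+\eta$. Regularizing $A$ and $B$ separately by \cite[Lemma 2.4.3]{DS10}, I obtain smooth closed positive currents $A_n\to A$ and $B_n\to B$, with $\{A_n\}=\{A\}$ and $\{B_n\}=\{B\}$, the convergence taking place in the strong sense under which superpotentials behave well; note that, for currents of bidegree $(k-s+1,k-s+1)$, the mass depends only on the cohomology class, so $\|A_n\|=\|A\|$ and $\|B_n\|=\|B\|$. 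Put $Q_n:=A_n-B_n$. Then $\{Q_n\}=\{A\}-\{B\}=\{R'\}=0$, so $Q_n\in\widetilde{\Dc}^0_{k-s+1}$ is smooth, $Q_n\to R'$ in the $*$-topology, and
\begin{align*}
	\|Q_n\|_*\le\|A_n\|+\|B_n\|=\|A\|+\|B\|<\|R'\|_*+\eta .
\end{align*}
The hypothesis on $S_0$ then gives $|\Uc_{S_0}(Q_n)|\le M\|Q_n\|_*\le M(\|R'\|_*+\eta)$ for every $n$.

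It remains to pass to the limit in $n$, which is the heart of the matter. Since each $Q_n$ is smooth, $\Uc_{S_0}(Q_n)=\Uc_{Q_n}(S_0)$ by the remark recalled above; and since $R'$ has a continuous superpotential and $Q_n\to R'$ in the strong sense provided by \cite[Lemma 2.4.3]{DS10}, I would invoke \cite[Proposition 3.2.8]{DS10} to get $\Uc_{Q_n}(S_0)\to\Uc_{R'}(S_0)$, together with a constant $c>0$ — depending only on $(X,\omega,s)$, in particular not on $S_0$ or $R'$ — for which $|\Uc_{R'}(S_0)|\le cM(\|R'\|_*+\eta)$. Letting $\eta\to0$ yields $|\Uc_{S_0}(R')|\le cM\|R'\|_*$ for all $R'\in\Dc^{0,c}_{k-s+1}$, which is the first assertion. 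The final assertion is then immediate: by Proposition \ref{prop:img_conti_suppot} and the remark recalled above, $f_*$ maps $\Dc^{0,c}_{k-s+1}$ into itself and $\Uc_{f^*(S_0)}(R')=\Uc_{S_0}(f_*(R'))$, so applying the bound just obtained to $f_*(R')$ in place of $R'$ gives $|\Uc_{f^*(S_0)}(R')|=|\Uc_{S_0}(f_*(R'))|\le cM\|f_*(R')\|_*$.

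The step I expect to be the main obstacle is exactly this limiting procedure: one must guarantee that the superpotentials of the smooth approximants $Q_n$, \emph{paired with the possibly very singular current $S_0$} (which is only assumed to have a bounded superpotential against smooth test currents), converge to $\Uc_{R'}(S_0)$ with at most a universal loss in the constant. This is precisely what \cite[Lemma 2.4.3]{DS10} (a mass-preserving regularization converging in the appropriate strong sense) and \cite[Proposition 3.2.8]{DS10} (control of superpotentials along such sequences) are designed to deliver; everything else — the choice of the decomposition $R'=A-B$ and the bookkeeping with cohomology classes and $*$-norms — is routine.
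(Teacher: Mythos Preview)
Your proposal is correct and follows essentially the same approach as the paper: the paper does not give a detailed proof but simply states that the result follows from \cite[Lemma 2.4.3]{DS10} and \cite[Proposition 3.2.8]{DS10} together with the remark on extending $\Uc_{S_0}$ via $\Uc_{S_0}(R'):=\Uc_{R'}(S_0)$, which is exactly the route you take. Your regularization argument fleshes out precisely how these two cited results combine, and the final assertion via Proposition~\ref{prop:img_conti_suppot} is handled identically.
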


\section{Simple action on cohomology}\label{sec:simple_action}

This section summarizes some properties about simple action on cohomology from \cite{DNV}. 
\smallskip

Let $(X, \omega)$ be a compact K\"ahler manifold of complex dimension $k\ge 2$. Let $f:X\to X$ be a surjective holomorphic endomorphism of $X$ with a simple action on cohomologies. For $0\le s\le k$, we denote by $d_s$ the dynamical degree of order $s$, that is, the spectral radius of the pull-back operator $f^*$ acting on the Hodge cohomology group $H^{s, s}(X, \C)$. 

Below are properties of $f$  taken from \cite{DNV}.
\begin{lemma}
	[{\cite[Lemma 5.2]{DNV}}]\label{lem:decomposition_cohom} The sequence $d_p^{-n}(f^n)^*\{\omega^p\}$ converges in $H^{p, p}(X, \R)$ to a non-zero class $c^+$. Let $L^+$ denote the real line spanned by $c^+$. Then, we can write $H^{p, p}(X, \R)=L^+\oplus H^+$, where $H^+$ is a hyperplane of $H^{p, p}(X, \R)$ which is invariant under $f^*$. Moreover, the decomposition $H^{p, p}(X, \R)=L^+\oplus H^+$ is independent of $\omega^p$, we have $d_p^{-1}f^*(c^+)=c^+$ and the spectral radius of $d_p^{-1}f^*$ on $H^+$ is strictly smaller than $1$.
\end{lemma}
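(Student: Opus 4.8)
The plan is to read this as a spectral decomposition statement for the operator $f^*$ on the finite-dimensional real vector space $V:=H^{p,p}(X,\R)$, together with one quantitative input from the theory of dynamical degrees. First I would record the linear algebra. Put $V_\C:=H^{p,p}(X,\C)$; the hypothesis that the action of $f$ on cohomology is simple says precisely that the spectrum of $f^*$ on $V_\C$ is the disjoint union of $\{d_p\}$ --- with $d_p$ real, positive and of algebraic multiplicity one --- and a finite set of eigenvalues of modulus strictly smaller than $d_p$. Hence $V_\C=E\oplus F$, where $E=\Ker(f^*-d_p\,\Id)$ is a complex line and $F$ is the sum of the generalized eigenspaces attached to the other eigenvalues; both are $f^*$-invariant, and the spectral radius of $d_p^{-1}f^*$ on $F$ is $<1$. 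Since $f^*$ and $d_p$ are real, complex conjugation on $V_\C$ fixes $E$ and $F$, so $E=L^+\otimes_\R\C$ and $F=H^+\otimes_\R\C$ with $L^+:=E\cap V$ a real line, $H^+:=F\cap V$ a real hyperplane, $V=L^+\oplus H^+$ an $f^*$-invariant decomposition, and the spectral radius of $d_p^{-1}f^*$ on $H^+$ still $<1$. The point to emphasize is that $E$ and $F$, hence $L^+$ and $H^+$, are intrinsic to $f^*$ and independent of $\omega$; this will supply the last assertion of the lemma.

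Next I would prove the convergence. Let $P\colon V\to V$ be the projection onto $L^+$ with kernel $H^+$; since both summands are $f^*$-invariant, $P$ commutes with $f^*$. For $v\in V$, writing $v=P(v)+w$ with $w\in H^+$ gives
\[
d_p^{-n}(f^n)^*v=P(v)+\bigl(d_p^{-1}f^*|_{H^+}\bigr)^n w\longrightarrow P(v),
\]
because $\bigl(d_p^{-1}f^*|_{H^+}\bigr)^n\to 0$, the spectral radius being $<1$. Taking $v=\{\omega^p\}$, the sequence $d_p^{-n}(f^n)^*\{\omega^p\}$ converges to $c^+:=P(\{\omega^p\})\in L^+$, and by continuity of $f^*$,
\[
f^*(c^+)=\lim_{n\to\infty}d_p\cdot d_p^{-(n+1)}(f^{n+1})^*\{\omega^p\}=d_p\,c^+ ,
\]
i.e.\ $d_p^{-1}f^*(c^+)=c^+$. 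Once we know (below) that $c^+\neq 0$, then $L^+=\R c^+$, and since the same computation with any other K\"ahler form produces a limit again in $L^+$, the decomposition $V=L^+\oplus H^+$ is independent of $\omega^p$.

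The only non-formal point is that $c^+\neq 0$. Pairing with $\{\omega^{k-p}\}$ under the Poincar\'e pairing $\langle\cdot,\cdot\rangle$, and using bilinearity and continuity of the cup product,
\[
\langle c^+,\{\omega^{k-p}\}\rangle=\lim_{n\to\infty}d_p^{-n}\langle (f^n)^*\{\omega^p\},\{\omega^{k-p}\}\rangle=\lim_{n\to\infty}d_p^{-n}\int_X (f^n)^*\omega^p\wedge\omega^{k-p}.
\]
I would then invoke the standard theory of dynamical degrees on compact K\"ahler manifolds (cf.\ \cite{DS06,DNV}): setting $a_n:=\int_X (f^n)^*\omega^p\wedge\omega^{k-p}>0$, one has a sub-multiplicativity estimate $a_{n+m}\le C\,a_n a_m$ for some $C>0$ independent of $n,m$, and $a_n^{1/n}\to d_p$. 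By Fekete's lemma the subadditive sequence $\log(Ca_n)$ satisfies $\lim_n\tfrac{1}{n}\log(Ca_n)=\inf_n\tfrac{1}{n}\log(Ca_n)$; as this limit equals $\log d_p$, we obtain $\log(Ca_n)\ge n\log d_p$ for all $n$, i.e.\ $a_n\ge C^{-1}d_p^{\,n}$. Hence $\langle c^+,\{\omega^{k-p}\}\rangle\ge C^{-1}>0$, so $c^+\neq 0$, and the proof is complete.

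I expect this last step to be the genuine obstacle; the rest is elementary spectral decomposition. A priori the vectors $d_p^{-n}(f^n)^*\{\omega^p\}$ could converge to $0$, and excluding this requires the lower bound $a_n\gtrsim d_p^{\,n}$, which in turn rests on the non-elementary identity $a_n^{1/n}\to d_p$ together with sub-multiplicativity. One cannot replace this by a soft positivity (Krein--Rutman) argument on the dual side, since the cone of cohomology classes of closed positive $(k-p,k-p)$-currents need not span $H^{k-p,k-p}(X,\R)$, so there is no full-dimensional $f_*$-invariant cone in which to trap the dual eigenvector.
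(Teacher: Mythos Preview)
The paper does not prove this lemma at all: it is quoted verbatim from \cite[Lemma~5.2]{DNV} as background, so there is no in-paper argument to compare against. Your write-up therefore has to be judged on its own, and the spectral-decomposition part together with the Fekete lower bound $a_n\ge C^{-1}d_p^{\,n}$ is correct and is essentially how the result is obtained in \cite{DNV}. The one nontrivial external input you invoke---the submultiplicativity $a_{n+m}\le C\,a_n a_m$ on a compact K\"ahler manifold---is indeed available in the Dinh--Sibony theory of dynamical degrees, so the citation is legitimate.

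There is, however, a misstatement in your closing paragraph that is worth correcting, because it hides a simpler route to $c^+\neq 0$. On a compact K\"ahler manifold the cone of classes of closed positive $(s,s)$-currents \emph{does} span $H^{s,s}(X,\R)$ for every $s$: any smooth real $(s,s)$-form $\alpha$ satisfies $\alpha+C\omega^s\ge 0$ for $C$ large, so $\{\alpha\}=\{\alpha+C\omega^s\}-C\{\omega^s\}$. In particular $\{\omega^p\}$ lies in the \emph{interior} of the positive cone in $H^{p,p}(X,\R)$: for every $\alpha\in H^{p,p}(X,\R)$ there is $C_\alpha>0$ with $-C_\alpha\{\omega^p\}\le \alpha\le C_\alpha\{\omega^p\}$ in the cone order. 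This gives $c^+\neq 0$ without any appeal to submultiplicativity: if $c^+=0$ then $\{\omega^p\}\in H^+$, hence $d_p^{-n}(f^n)^*\{\omega^p\}\to 0$; applying $d_p^{-n}(f^n)^*$ to the inequalities above and pairing with an arbitrary positive class $\mu\in H^{k-p,k-p}(X,\R)$ yields $|\langle d_p^{-n}(f^n)^*\alpha,\mu\rangle|\le C_\alpha\langle d_p^{-n}(f^n)^*\{\omega^p\},\mu\rangle\to 0$. Taking $\alpha\in L^+\setminus\{0\}$ (so the left side is the constant $|\langle\alpha,\mu\rangle|$) forces $\langle\alpha,\mu\rangle=0$ for every positive $\mu$, hence for every $\mu$ by the spanning property, contradicting Poincar\'e duality. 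So your assertion that a cone argument is unavailable is inaccurate; in fact the cone argument is both shorter and avoids importing the submultiplicativity estimate.
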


\begin{lemma}
	[{\cite[Lemma 5.3]{DNV}}] the sequence $d_p^{-n}(f^n)_*\{\omega^{k-p}\}$ converges in $H^{k-p, k-p}(X, \R)$ to a non-zero class $c^-$. Let $L^-$ be the real line spanned by $c^-$. Then, we can write $H^{k-p, k-p}(X, \R)$ $=L^-\oplus H^-$, where $H^-$ is a hyperplane of $H^{k-p, k-p}(X, \R)$ , invariant by $f_*$. Moreover, we have $d_p^{-1}f_*(c^-)=c^-$ and the spectral radius of $d_p^{-1}f_*$ on $H^-$ is strictly smaller than $1$.
\end{lemma}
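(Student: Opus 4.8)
The plan is to deduce the lemma from its analogue for $f^*$ on $H^{p,p}$, namely Lemma~\ref{lem:decomposition_cohom}, via Poincar\'e duality. The point is that $f_*$ on $H^{k-p,k-p}(X,\R)$ is the transpose of $f^*$ on $H^{p,p}(X,\R)$: since $f$ is a (generically finite) surjective holomorphic self-map of a compact manifold, the projection formula $f_*(f^*\alpha\smile\beta)=\alpha\smile f_*\beta$ together with $\int_X f_*(\,\cdot\,)=\int_X(\,\cdot\,)$ on $H^{2k}$ yields $\langle f^*\alpha,\beta\rangle=\langle\alpha,f_*\beta\rangle$ for the cup-product pairing $H^{p,p}(X,\R)\times H^{k-p,k-p}(X,\R)\to\R$. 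Hence $f_*|_{H^{k-p,k-p}}$ and $f^*|_{H^{p,p}}$ have the same characteristic polynomial and, more generally, conjugate Jordan structures.

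With this in hand the first half of the statement is pure linear algebra. By the simplicity hypothesis, $d_p$ is a simple eigenvalue of $f_*$ on $H^{k-p,k-p}(X,\C)$ and the only one of maximal modulus; since $d_p>0$, its eigenspace $L^-$ is a real line, and the sum $H^-$ of the remaining generalized eigenspaces is a real $f_*$-invariant hyperplane with $H^{k-p,k-p}(X,\R)=L^-\oplus H^-$, $f_*=d_p\,\id$ on $L^-$, and spectral radius of $d_p^{-1}f_*$ on $H^-$ strictly smaller than $1$. Therefore $d_p^{-n}(f^n)_*=(d_p^{-1}f_*)^n$ converges to the projection $\pi^-$ of $H^{k-p,k-p}(X,\R)$ onto $L^-$ along $H^-$; in particular $d_p^{-n}(f^n)_*\{\omega^{k-p}\}\to \pi^-\{\omega^{k-p}\}=:c^-\in L^-$, and $d_p^{-1}f_*(c^-)=c^-$ because $c^-\in L^-$.

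It remains to check $c^-\neq 0$, and here one has to use more than formal properties of $f_*$. Because the two decompositions $H^{p,p}(X,\R)=L^+\oplus H^+$ and $H^{k-p,k-p}(X,\R)=L^-\oplus H^-$ come from a transpose pair of operators, they are dual for the cup pairing, i.e.\ $L^+\perp H^-$ and $H^+\perp L^-$; for instance, for $\beta\in H^-$ the identity $d_p^n\langle c^+,\beta\rangle=\langle c^+,(f_*)^n\beta\rangle$ has right-hand side of size $O(\rho^n d_p^n)$ with $\rho<1$, forcing $\langle c^+,\beta\rangle=0$. Since the cup pairing is non-degenerate, it restricts to a non-degenerate pairing of $L^+=\R c^+$ with $L^-$, so $c^-\neq 0$ is equivalent to $\langle c^+,c^-\rangle\neq 0$; and as $c^+\perp H^-$, one has $\langle c^+,c^-\rangle=\langle c^+,\{\omega^{k-p}\}\rangle$. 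Finally, by Lemma~\ref{lem:decomposition_cohom}, $\langle c^+,\{\omega^{k-p}\}\rangle=\lim_n d_p^{-n}\!\int_X (f^n)^*(\omega^p)\wedge\omega^{k-p}\ge 0$; if this limit vanished, the closed positive currents $R_n:=d_p^{-n}(f^n)^*(\omega^p)$ would have mass $\|R_n\|=\langle R_n,\omega^{k-p}\rangle\to 0$, hence $\{R_n\}\to 0$ because on closed positive currents the cohomology class is dominated by the mass, contradicting $\{R_n\}\to c^+\neq 0$. Thus $\langle c^+,\{\omega^{k-p}\}\rangle>0$ and $c^-\neq 0$.

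The whole argument is soft apart from the very last step: proving $c^-\neq 0$ is the only place where one cannot argue by linear algebra alone, since a limit of nonnegative numbers can be $0$, and one must import the nonvanishing from Lemma~\ref{lem:decomposition_cohom} through the elementary bound of the cohomology class of a closed positive current by its mass. I expect this to be the main (mild) obstacle; the rest is the standard behaviour of a transpose pair of operators with a simple dominant eigenvalue.
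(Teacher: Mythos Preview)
Your argument is correct. Note, however, that the paper does not prove this lemma at all: it is merely quoted from \cite[Lemma~5.3]{DNV} in the survey Section~\ref{sec:simple_action}, so there is no ``paper's own proof'' to compare with. Your deduction of the $f_*$ statement from Lemma~\ref{lem:decomposition_cohom} via Poincar\'e duality (transposition $\langle f^*\alpha,\beta\rangle=\langle\alpha,f_*\beta\rangle$) plus the mass argument for $c^-\neq 0$ is a clean, self-contained route; the cited reference instead relies on the fact that $f^{-1}$ is a holomorphic correspondence (Lemma~\ref{lem:covering}) to run the proof of Lemma~\ref{lem:decomposition_cohom} verbatim with $f_*$ in place of $f^*$, which is more symmetric but less economical here since you already have the $f^*$ version available.
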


\begin{proposition}[{\cite[Proposition 5.5]{DNV}}]
	\label{prop:construction_Green} The sequence $d_p^{-n}(f^n)^*(\omega^p)$ converges weakly to a closed positive $(p, p)$-current $T_p^+$ in the cohomology class $c^+$, as $n$ tends to infinity. The sequence $d_p^{-n}(f^n)_*(\omega^{k-p})$ converges weakly to a closed positive $(k-p, k-p)$-current $T_p^-$ in the cohomology class $c^-$, as $n$ tends to infinity. Moreover, $T_p^+$ and $T_p^-$ have continuous superpotentials.
\end{proposition}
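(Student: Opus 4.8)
The plan is to realise $T_p^+$ as the limit of $S_n:=d_p^{-n}(f^n)^*(\omega^p)$ by controlling its super-potential, and to obtain $T_p^-$ by the mirror argument applied to $f_*$ (legitimate by Lemma \ref{lem:covering} and the discussion following it). First note that $S_{n+1}=d_p^{-1}f^*(S_n)$, so each $S_n$ is a closed positive $(p,p)$-current and, by Lemma \ref{lem:decomposition_cohom}, $\{S_n\}=(d_p^{-1}f^*)^n\{\omega^p\}$ converges to $c^+$ at a geometric rate (the spectral radius of $d_p^{-1}f^*$ on $H^+$ being $<1$); in particular the masses $\|S_n\|$, which depend only on $\{S_n\}$, are bounded. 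Using a bounded linear section $H^+\to\{\text{smooth forms}\}$ together with a fixed smooth representative $\beta_\infty$ of $c^+$, choose smooth forms $\beta_n$ in the class $\{S_n\}$ with $\beta_n\to\beta_\infty$ in every $C^\ell$-norm at a geometric rate. Put $A_n:=S_n-\beta_n\in\Dc_p^0$. It then suffices to prove that the super-potentials $\Uc_{A_n}$ converge uniformly on $*$-bounded subsets of $\widetilde{\Dc}_{k-p+1}^0$: by the super-potential calculus of \cite{DS10} (in the spirit of \cite[Lemma 2.4.3]{DS10}), this forces $A_n$ to converge in $\Dc_p^0$ to a current with continuous super-potential, hence $S_n=A_n+\beta_n$ to converge as well.

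Set $\gamma_n:=d_p^{-1}f^*(\beta_n)-\beta_{n+1}$. Its class is $d_p^{-1}f^*\{S_n\}-\{S_{n+1}\}=0$, so $\gamma_n\in\widetilde{\Dc}_p^0$, with $\|\gamma_n\|_{C^\ell}$ bounded uniformly in $n$ and $\gamma_n\to\gamma_\infty:=d_p^{-1}f^*(\beta_\infty)-\beta_\infty$. From $A_{n+1}=d_p^{-1}f^*(A_n)+\gamma_n$, and since smooth forms lie in $\Dc_p^{0,c}$ and $f^*$ preserves $\Dc_p^{0,c}$ (Proposition \ref{prop:img_conti_suppot}), induction gives that each $A_n$ has a continuous super-potential and
\begin{displaymath}
	A_n=d_p^{-n}(f^n)^*(A_0)+\sum_{m=0}^{n-1}d_p^{-m}(f^m)^*(\gamma_{n-1-m}),\qquad A_0=\omega^p-\beta_0 .
\end{displaymath}
Applying Proposition \ref{prop:img_conti_suppot} iteratively and using the linearity of the super-potential in its current, we obtain for every $R\in\widetilde{\Dc}_{k-p+1}^0$
\begin{displaymath}
	\Uc_{A_n}(R)=d_p^{-n}\,\Uc_{A_0}\bigl((f^n)_*R\bigr)+\sum_{m=0}^{n-1}d_p^{-m}\,\Uc_{\gamma_{n-1-m}}\bigl((f^m)_*R\bigr),
\end{displaymath}
where $\Uc_{A_0}$ and the $\Uc_{\gamma_j}$ are the super-potentials of the smooth forms $A_0$ and $\gamma_j$.

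To pass to the limit I would solve the $dd^c$-equation with a bounded linear operator, producing smooth potentials $U_{\gamma_j}$ with $\|U_{\gamma_j}\|_{C^0}$ bounded uniformly in $j$; then $|\Uc_{\gamma_j}(R')|\le C\|R'\|_*$ for all $R'$ with $C$ independent of $j$, and likewise for $\Uc_{A_0}$. Next, by Poincar\'e duality $f_*$ on $H^{k-p+1,k-p+1}(X,\R)$ is the transpose of $f^*$ on $H^{p-1,p-1}(X,\R)$, whose spectral radius is $d_{p-1}<d_p$; since the mass of a closed positive current is cohomological, this yields $\|(f^m)_*R\|_*\le C_\epsilon(d_{p-1}+\epsilon)^m\|R\|_*$ for every $\epsilon>0$. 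Hence the contribution of each $m$ to $\Uc_{A_n}(R)$ is $O\!\bigl(((d_{p-1}+\epsilon)/d_p)^m\|R\|_*\bigr)$, which is summable once $\epsilon$ is small, while $\gamma_{n-1-m}\to\gamma_\infty$ for each fixed $m$; a dominated-convergence argument for series then shows $\Uc_{A_n}(R)$ converges, uniformly for $R$ in $*$-bounded subsets of $\widetilde{\Dc}_{k-p+1}^0$, to
\begin{displaymath}
	\Uc_\infty(R):=\sum_{m\ge0}d_p^{-m}\,\Uc_{\gamma_\infty}\bigl((f^m)_*R\bigr),
\end{displaymath}
the term $d_p^{-n}\Uc_{A_0}((f^n)_*R)$ tending to $0$. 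By Proposition \ref{prop:ext_to_conti} the convergence also holds on $\Dc_{k-p+1}^{0,c}$.

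It remains to convert this into weak convergence of currents. By the correspondence between the $*$-topology and super-potentials in \cite{DS10}, $A_n$ converges in $\Dc_p^0$ to some $S_\infty\in\Dc_p^{0,c}$ with $\Uc_{S_\infty}=\Uc_\infty$ continuous, whence $S_n=A_n+\beta_n\to S_\infty+\beta_\infty=:T_p^+$. The current $T_p^+$ is closed and positive as a weak limit of closed positive currents, lies in the class $\lim\{S_n\}=c^+$, and has a continuous super-potential because $T_p^+-\beta_\infty=S_\infty$ does. The construction of $T_p^-$ (with a continuous super-potential) is identical, replacing $f^*$ by $f_*$, $\omega^p$ by $\omega^{k-p}$, Lemma \ref{lem:decomposition_cohom} by \cite[Lemma 5.3]{DNV}, and using Lemma \ref{lem:covering} to transport the super-potential formalism. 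The main obstacle is this last step: invoking the super-potential machinery of \cite{DS10} so that uniform convergence of super-potentials on $*$-bounded sets really delivers weak convergence of the currents together with continuity of the limiting super-potential; the auxiliary point requiring care is the Poincar\'e-duality identification that makes the relevant growth rate of $f_*$ equal to $d_{p-1}$, strictly below $d_p$.
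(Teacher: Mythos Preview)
The paper does not prove this proposition; it is quoted from \cite[Proposition~5.5]{DNV} without argument, so there is no in-paper proof to compare against. Your sketch is essentially the argument of \cite{DNV}: fix smooth representatives $\beta_n$ of the moving classes $\{S_n\}$, telescope the super-potential of $A_n=S_n-\beta_n$ via the recursion $A_{n+1}=d_p^{-1}f^*(A_n)+\gamma_n$, and use the spectral gap $d_{p-1}<d_p$ to get a geometrically convergent series for the limiting super-potential. The paper itself invokes exactly these ingredients (\cite[Lemmas 5.6, 5.7]{DNV}) in the proofs of Lemma~\ref{lem:smooth_conv} and Lemma~\ref{le-psinho}.

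Your two stated worries are not real obstacles. For the ``last step'', note that for any smooth $(k-p,k-p)$-form $\varphi$ one has $\langle A_n,\varphi\rangle=\Uc_{A_n}(dd^c\varphi)$, and $dd^c\varphi\in\widetilde{\Dc}^0_{k-p+1}$ (it is smooth, exact, and a difference of positive closed forms after adding a multiple of $\omega^{k-p+1}$); thus uniform convergence of $\Uc_{A_n}$ on $*$-bounded sets already yields weak convergence of $A_n$, hence of $S_n$, with no extra machinery needed. Continuity of $\Uc_\infty$ follows because each term $R\mapsto d_p^{-m}\Uc_{\gamma_\infty}((f^m)_*R)$ is continuous (Proposition~\ref{prop:img_conti_suppot}) and the series converges uniformly on $*$-bounded sets. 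The Poincar\'e-duality identification is equally standard: $f_*$ on $H^{k-p+1,k-p+1}(X,\R)$ is the transpose of $f^*$ on $H^{p-1,p-1}(X,\R)$, so its spectral radius is $d_{p-1}$, and the mass of a closed positive current is cohomological.
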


We adapt some notation from \cite{DNV} for later use. Let $\Hc^-$ be a real vector space spanned by a finite number of real smooth closed $(k-p, k-p)$-forms such that the map $\alpha\to\{\alpha\}$ is a bijection from $\Hc^-$ onto the hyperplane $H^-$ in $H^{k-p, k-p}(X, \R)$. Let $\alpha_0$ be a form in the class $c^-$. Notice that $\{\omega^{k-p}\}= c^-+ c_0$ where $c_0$ is a class in $H^-$. So, we can choose $\Hc^-$ so that the form $\Omega_0:=\omega^{k-p}-\alpha_0$ belongs to $\Hc^-$. Note that $\{\Omega_0\}=c_0$.
\medskip

By the definition of $\Hc^-$, for each $c\in H^-$, there exists a unique form $\beta_c\in\Hc^-$ such that $\{\beta_c\}=c$. Note that the class of $d_p^{-1}f_*(\beta_c)-\beta_{d_p^{-1}f_*(c)}$ is $0$ as $f_*$ commutes with $dd^c$. We denote by $\mathcal{W}_c$ the superpotential of $d_p^{-1}f_*(\beta_c)-\beta_{d_p^{-1}f_*(c)}$ for each $c\in H^-$.
\medskip

We will use the following throughout the paper:
\begin{align*}
	c_n&:=d_p^{-n}(f^n)_*(c_0),\quad\mathcal{W}_n:=\mathcal{W}_{c_n},\\
	\beta_n&:=\textrm{ the unique form in } \Hc^- \textrm{ such that }\{\beta_n\}=c_n,\\
	\Omega_n&:=d_p^{-n}(f^n)_*(\Omega_0)\textrm{ and }\\
	\Uc_n'&:= \textrm{ the superpotential of }\Omega_n-\beta_n.	
\end{align*}

By the same arguments as in \cite[Proposition 4.4]{DNV}, $\mathcal{W}_c, \mathcal{W}_n, \Uc_n'$ are all continuous. Since $\Omega_0=\beta_0$, we have $\Uc_0'=0$. The spectral radius of $d_p^{-1}f_*$ on $H^-$ is smaller than $1$. Thus, once we fix a norm  $\|\cdot\|$ on $H^{k-p, k-p}(X, \R)$, we have $\|c_n\|=\Oc(\delta^n)$ and $\|\beta_n\|_\infty=\Oc(\delta^n)$ for some $0<\delta<1$.
\medskip

In the rest of the paper, we use $p$ to mean the order of the main dynamical degree $d_p$ for a surjective holomorphic endomorphism $f:X\to X$ with simple action on cohomology. For a general bidegree, we will use $s$.


\section{Equidistribution for currents with bounded superpotentials}

In this section, we prove that for a surjective holomorphic endomorphism $f:X\to X$ with simple action on cohomology and for a currrent $S\in\Cc_p$ with bounded superpotentials, Theorem \ref{thm:1st-main} is true. We start by proving the following lemma.
\begin{lemma}\label{lem:smooth_conv}
	Let $X$ and $f$ be as in Theorem \ref{thm:1st-main}. Let $\alpha$ be a closed smooth form of bidegree $(p, p)$. Then, we have
	\begin{align*}
		d_p^{-n}(f^n)^*(\alpha) \to cT_p^+
	\end{align*}
	where $T_p^+=\lim_{n\to\infty}d_p^{-n}(f^n)^*(\omega^p)$ and $c=\langle \alpha, T_p^-\rangle/\langle \omega^{p}, T_p^-\rangle$.
\end{lemma}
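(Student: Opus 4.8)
The plan is to reduce the convergence of $d_p^{-n}(f^n)^*(\alpha)$ to the cohomological statement of Lemma \ref{lem:decomposition_cohom} together with the continuity of superpotentials from Proposition \ref{prop:construction_Green}. First I would write $\alpha = c\,\alpha_+ + \beta$, where $\alpha_+$ is a fixed smooth closed $(p,p)$-form in the class $c^+$, the constant $c$ is chosen so that $\{\alpha\} - c\,c^+ \in H^+$, and $\beta$ is a smooth closed $(p,p)$-form representing that class in $H^+$. Concretely, pairing against the class $c^-$ (which is dual to $c^+$ in the sense that $\{T_p^-\}\smile c^+ \neq 0$, since $\langle \omega^p, T_p^-\rangle$ and $\langle \alpha_+, T_p^-\rangle$ agree up to the obvious normalization — more precisely, $H^+$ is exactly the annihilator of $c^-$ under the cup product pairing because $H^+$ is $f^*$-invariant with spectral radius $<d_p$ while $d_p^{-n}(f^n)_*c^- = c^-$), one sees that $c = \langle \alpha, T_p^-\rangle / \langle \omega^p, T_p^-\rangle$, which identifies the constant in the statement. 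One should check that $\{T_p^-\}\smile c$ annihilates $H^+$: for $c' \in H^+$, $\{T_p^-\}\smile c' = (d_p^{-n}(f^n)_* c^-)\smile c' = c^- \smile (d_p^{-n}(f^n)^* c') \to 0$ since $(f^n)^* c'$ grows slower than $d_p^n$; hence $\{T_p^-\}\smile c'=0$, which pins down $c$.

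Next, for the piece $d_p^{-n}(f^n)^*(c\,\alpha_+)$: writing $\alpha_+ = \omega^p + (\alpha_+ - \omega^p)$ with $\alpha_+ - \omega^p$ smooth and cohomologous to a class in $H^+$, I would handle $d_p^{-n}(f^n)^*(\omega^p) \to T_p^+$ by Proposition \ref{prop:construction_Green}, so it suffices to show $d_p^{-n}(f^n)^*(\gamma) \to 0$ for every smooth closed $(p,p)$-form $\gamma$ whose class lies in $H^+$ (this covers both $\alpha_+-\omega^p$ and $\beta$). Such a $\gamma$ can be corrected: let $\gamma_n$ be the unique smooth form in a fixed finite-dimensional space of representatives of $H^+$ with $\{\gamma_n\} = d_p^{-n}(f^n)^*\{\gamma\}$; then $\|\gamma_n\|_\infty = O(\delta^n)$ for some $\delta < 1$ by the spectral radius bound in Lemma \ref{lem:decomposition_cohom}, so $\gamma_n \to 0$ uniformly. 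The difference $d_p^{-n}(f^n)^*(\gamma) - \gamma_n$ lies in $\Dc_p^0$ (class zero), and I would control it via superpotentials: its superpotential at a smooth test current $R_0 \in \widetilde{\Dc}^0_{k-p+1}$ equals, by Proposition \ref{prop:img_conti_suppot} and the remark following it, the superpotential of $\gamma - $ (the relevant correction form at level $0$) evaluated at $d_p^{-n}(f^n)_*(R_0)$, and $d_p^{-n}\|(f^n)_*(R_0)\|_*$ stays bounded (indeed decays) because $(f^n)_*$ has spectral norm comparable to $d_p^n$ on $\Dc_{k-p+1}$ by simplicity; combined with the continuity/boundedness estimates of Proposition \ref{prop:ext_to_conti} this forces the superpotentials to go to $0$, hence $d_p^{-n}(f^n)^*(\gamma) - \gamma_n \to 0$ in the $*$-topology, and therefore weakly.

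The main obstacle I anticipate is the bookkeeping around the ``class-zero correction forms'' and verifying that the superpotential pairings genuinely decay rather than merely stay bounded — i.e., getting an honest $o(1)$ (or $O(\delta^n)$) estimate uniformly over the relevant normalized test currents, using that the $*$-mass of $d_p^{-n}(f^n)_*(R_0)$ does not blow up. This is where one must invoke the full strength of simple action (not just the Green current's existence): the operator $d_p^{-1}f_*$ on the relevant current space has the cohomological eigenvalue $1$ only on a one-dimensional subspace and is contracting on a complement, which is precisely what makes the superpotential of $\gamma$ minus its correction — a fixed continuous function — get evaluated at currents whose mass stays under control. A secondary technical point is making the decomposition $\alpha = c\,\alpha_+ + \beta$ and all the auxiliary finite-dimensional spaces of representatives consistent with the notation $\Hc^-, \beta_n, \Uc_n'$ set up in Section \ref{sec:simple_action}, so that the estimates $\|\beta_n\|_\infty = O(\delta^n)$ can be quoted directly; once that alignment is in place the argument is a routine combination of the quoted lemmas.
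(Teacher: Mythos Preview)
Your overall strategy is sound and workable, but it differs from the paper's argument, and one step as written is not correct.

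The paper does not decompose $\alpha$ along $L^+\oplus H^+$. Instead it first establishes that the limit $T_\alpha:=\lim_n d_p^{-n}(f^n)^*\alpha$ exists, by writing $\alpha$ as a difference of smooth closed positive forms $\theta\ge\omega^p$ and invoking the proof of \cite[Lemma~5.2]{DNV} for each such $\theta$; the limit has class in $L^+$, continuous superpotential, and satisfies $d_p^{-1}f^*T_\alpha=T_\alpha$. Then $T_\alpha-cT_p^+$ has class zero and is $d_p^{-1}f^*$-invariant, so for any smooth test form $\varphi$,
\[
\langle T_\alpha-cT_p^+,\varphi\rangle=d_p^{-n}\,\Uc_{T_\alpha-cT_p^+}\big((f^n)_*\ddc\varphi\big),
\]
and the right side tends to $0$ because $\|(f^n)_*\ddc\varphi\|_*\lesssim (d')^n$ with $d_{p-1}<d'<d_p$. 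The constant $c$ is then read off by comparing masses. The point is that invariance of the \emph{limit} replaces all correction-form bookkeeping.

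Your route via the $H^+$-piece is also viable (it is essentially the $f^*$-side analogue of \cite[Lemma~5.7]{DNV}, which the paper itself invokes later in Theorem~\ref{thm:equi_bdd}), but the superpotential identity you wrote is incorrect: for $\gamma$ with $\{\gamma\}\in H^+$ and correction forms $\gamma_n$, the current $d_p^{-n}(f^n)^*\gamma-\gamma_n$ is \emph{not} the pullback by $(f^n)^*$ of a fixed class-zero current, since $\gamma_n\neq d_p^{-n}(f^n)^*\gamma_0$. Hence you cannot write its superpotential as $\Uc_{\gamma-\gamma_0}\big(d_p^{-n}(f^n)_*R_0\big)$. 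What is actually needed is the telescoping
\[
d_p^{-n}(f^n)^*\gamma-\gamma_n=\sum_{j=0}^{n-1} d_p^{-(n-1-j)}(f^{n-1-j})^*\big(d_p^{-1}f^*\gamma_j-\gamma_{j+1}\big),
\]
whose $j$th summand has superpotential bounded by $O(\delta^j)\cdot\|d_p^{-(n-1-j)}(f^{n-1-j})_*R_0\|_*=O\big(\delta^j(d'/d_p)^{n-1-j}\big)$, giving the required $O(n\max(\delta,d'/d_p)^n)$ decay. This is exactly the bookkeeping you flagged as the main obstacle; the paper's proof avoids it entirely.
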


\begin{proof}
	The proof of {\cite[Lemma 5.2]{DNV}} implies that for any smooth closed positive $(p, p)$-current $\theta\ge \omega^p$, the sequence $d_p^{-n}(f^n)^*(\theta)$ converges to a closed current $T_\theta$ such that $d_p^{-1}f^*(T_\theta)=T_\theta$ in the sense of currents, its cohomology class $\{T_\theta\}\in L^+$, its superpotentials are continuous. Since $\alpha$ can be written as a difference of such two smooth closed positive $(p, p)$-currents, we see that the sequence $d_p^{-n}(f^n)^*(\alpha)$ converges in the sense of currents. Let $T_\alpha$ denote the limit current.
	\medskip
	
	Since the dimension of $L^+$ is $1$, there exists a constant $c>0$ such that the class of $T_\alpha - cT^+_p$ is $0$. Since $T_\alpha$ and $T^+_p$ are invariant under the action of $d_pf^*$, we have
	\begin{align*}
		\langle T_\alpha - cT^+_p, \varphi\rangle &= d_p^{-n}\langle (f^n)^*(T_\alpha-cT^+_p), \varphi\rangle=d_p^{-n}\Uc_{(f^n)^*(T_\alpha-cT^+_p)}(dd^c\varphi)\\
		&=d_p^{-n}\Uc_{T_\alpha-cT^+_p}((f^n)_*(dd^c\varphi))
	\end{align*}
	for every $n\in\N$ where $\varphi$ is a smooth test form of bidegree $(k-p, k-p)$. The last equality comes from Proposition \ref{prop:img_conti_suppot}. Since $T_\alpha$ and $T^+_p$ admit continuous superpotentials and therefore, their superpotentials are bounded on a $*$-bounded set. Since the dynamical degree $d_{p-1}$ of order ${p-1}$ of $f$ is strictly smaller than $d_p$, there exists $d_{p-1}<d'<d_p$ such that for all sufficiently large $n\in\N$, $\|(f^n)_*(dd^c\varphi)\|_*\lesssim (d')^n$ where the inequality is up to a constant multiple independent of $n\in\N$. So, we get
	\begin{displaymath}
		|\langle T_\alpha - cT^+_p, \varphi\rangle|\lesssim \lim_{n\to\infty}(d'/d_p)^n=0.
	\end{displaymath}
	Hence, we conclude that $T_\alpha=cT^+_p$.
	\medskip
	
	To find the constant $c>0$, it is enough to compare the mean of the currents.
	\begin{align*}
		c&=\dfrac{\langle T_\alpha, \omega^{k-p}\rangle}{\langle T^+_p, \omega^{k-p}\rangle}=\dfrac{\lim_{n\to\infty}\langle d_p^{-n}(f^n)^*(\alpha), \omega^{k-p}\rangle}{\lim_{n\to\infty}\langle d_p^{-n}(f^n)^*(\omega^p), \omega^{k-p}\rangle}\\
		&=\dfrac{\lim_{n\to\infty}\langle \alpha, d_p^{-n}(f^n)_*(\omega^{k-p})\rangle}{\lim_{n\to\infty}\langle \omega^p, d_p^{-n}(f^n)_*(\omega^{k-p})\rangle}=\dfrac{\langle \alpha, T^-_p\rangle}{\langle \omega^{p}, T^-_p\rangle}.
	\end{align*}
\end{proof}

\begin{theorem}\label{thm:equi_bdd}
	Let $X$ and $f$ be as in Theorem \ref{thm:1st-main}. Let $S$ be a current in $\Dc_p$ and $\alpha_S$ a smooth closed $(p, p)$-form cohomologous to $S$. Assume that the superpotential of $S-\alpha_S$ admits bounded superpotentials such that
	\begin{align*}
		|\Uc_{S-\alpha_S}(R)|\le M\|R\|_*.
	\end{align*}
	for $R\in\widetilde{\Dc}^0_{k-p+1}$. Then, $d_p^{-n}(f^n)^*(S)$ converges to $cT^+$ exponentially fast in the sense of currents where $c={\langle \alpha_S, T^-_p\rangle}/{\langle\omega^p, T^-_p\rangle}={(\{S\}\smile\{T^-_p\})}/{ (\{\omega^{p}\}\smile \{T^-_p\}) }$. More precisely, for a smooth test form $\varphi$ of bidegree $(k-p,k-p)$, we have
	\begin{align*}
		|\langle d_p^{-n}(f^n)^*(S)-cT^+_p, \varphi\rangle|\le C \|\varphi\|_{C^2} M \delta^n
	\end{align*}
	where $0<\delta<1$ and $C>0$ are constants independent of $M$, $S$, $f$ and $n$, and $T^+_p=\lim_{n\to\infty}d_p^{-n}(f^n)^*\omega^p$.
\end{theorem}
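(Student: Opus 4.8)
The plan is to reduce the statement to Lemma~\ref{lem:smooth_conv} by controlling the difference $S - \alpha_S$, which is a current in $\Dc^0_p$ with bounded superpotential. First I would write, for a smooth test form $\varphi$ of bidegree $(k-p,k-p)$, the splitting
\begin{align*}
	\langle d_p^{-n}(f^n)^*(S)-cT^+_p, \varphi\rangle = \langle d_p^{-n}(f^n)^*(S-\alpha_S), \varphi\rangle + \langle d_p^{-n}(f^n)^*(\alpha_S)-cT^+_p, \varphi\rangle.
\end{align*}
The second term is controlled by Lemma~\ref{lem:smooth_conv}: since $\alpha_S$ is a smooth closed $(p,p)$-form and $\{S\}=\{\alpha_S\}$, the constant there equals $c=\langle \alpha_S, T^-_p\rangle/\langle\omega^p, T^-_p\rangle$, matching the claim; one still has to extract an \emph{exponential} rate, which follows by reworking the estimate in the proof of Lemma~\ref{lem:smooth_conv} quantitatively — the key input being $\|(f^n)_*(dd^c\varphi)\|_*\lesssim (d')^n$ with $d'<d_p$, giving a factor $(d'/d_p)^n$ — and keeping track of the $C^2$-norm of $\varphi$ in the implicit constant (the mass $\|dd^c\varphi\|_*$ and the superpotential bounds on a fixed $*$-bounded set are both $\Oc(\|\varphi\|_{C^2})$).

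The main work is the first term. Since $S-\alpha_S\in\Dc^0_p$ and $f^*$ preserves $\Dc^0_p$, we can use superpotentials: writing $S_0:=S-\alpha_S$, we have
\begin{align*}
	\langle d_p^{-n}(f^n)^*(S_0), \varphi\rangle = d_p^{-n}\,\Uc_{(f^n)^*(S_0)}(dd^c\varphi) = d_p^{-n}\,\Uc_{S_0}\big((f^n)_*(dd^c\varphi)\big),
\end{align*}
where the last equality comes from iterating Proposition~\ref{prop:img_conti_suppot} (equivalently $(f^n)^*=(f^*)^n$). Now $dd^c\varphi\in\widetilde{\Dc}^0_{k-p+1}$, so $(f^n)_*(dd^c\varphi)\in\Dc^0_{k-p+1}$; by Proposition~\ref{prop:ext_to_conti} the bounded-superpotential hypothesis on $S_0$ with constant $M$ self-improves to $|\Uc_{S_0}(R')|\le cM\|R'\|_*$ for $R'\in\Dc^{0,c}_{k-p+1}$ (with $c$ a universal constant, renamed to avoid clash with the constant in the statement). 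Applying this with $R'=(f^n)_*(dd^c\varphi)$ gives
\begin{align*}
	|\langle d_p^{-n}(f^n)^*(S_0), \varphi\rangle| \le d_p^{-n}\,cM\,\|(f^n)_*(dd^c\varphi)\|_* \lesssim M\,\|\varphi\|_{C^2}\,(d'/d_p)^n,
\end{align*}
using again $\|(f^n)_*(dd^c\varphi)\|_*\lesssim (d')^n\|\varphi\|_{C^2}$ for $d_{p-1}<d'<d_p$ and all large $n$; this bound $\|(f^n)_*(dd^c\varphi)\|_*\lesssim (d')^n \|\varphi\|_{C^2}$ is where the gap $d_{p-1}<d_p$ in the simple-action hypothesis is used, as in Lemma~\ref{lem:smooth_conv}.

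Combining the two terms, setting $\delta:=d'/d_p<1$ and absorbing the finitely many small-$n$ terms into the constant $C$ yields
\begin{align*}
	|\langle d_p^{-n}(f^n)^*(S)-cT^+_p, \varphi\rangle|\le C\,\|\varphi\|_{C^2}\,M\,\delta^n,
\end{align*}
as desired; weak convergence of $d_p^{-n}(f^n)^*(S)$ to $cT^+_p$ is immediate, and the identity $c=(\{S\}\smile\{T^-_p\})/(\{\omega^p\}\smile\{T^-_p\})$ follows since $\{S\}=\{\alpha_S\}$ and cup product is computed by integration against the closed currents $T^-_p$ and $\omega^p$. The main obstacle I anticipate is not conceptual but bookkeeping: making every implicit constant manifestly independent of $S$, $M$, $f$ and $n$, which requires being careful that the self-improvement constant in Proposition~\ref{prop:ext_to_conti} and the mass-growth constant for $(f^n)_*$ depend only on $(X,\omega)$ and $f$ (the latter only through the gap between $d_{p-1}$ and $d_p$, after fixing $d'$), and that $\|(f^n)_*(dd^c\varphi)\|_*$ is linear in $\varphi$ so the $C^2$-dependence is exactly first order.
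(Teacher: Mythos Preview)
Your approach is the same as the paper's: split off $S-\alpha_S$ and handle it via the bounded-superpotential hypothesis together with Proposition~\ref{prop:ext_to_conti} and the mass growth $\|(f^n)_*(dd^c\varphi)\|_*\lesssim (d')^n$ with $d'<d_p$; then deal with the smooth part $\alpha_S$ by going back to Lemma~\ref{lem:smooth_conv}. The first term is treated exactly as in the paper.

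There is one imprecision in your treatment of the second term. You write that the exponential rate for $\langle d_p^{-n}(f^n)^*(\alpha_S)-cT^+_p,\varphi\rangle$ ``follows by reworking the estimate in the proof of Lemma~\ref{lem:smooth_conv} quantitatively --- the key input being $\|(f^n)_*(dd^c\varphi)\|_*\lesssim (d')^n$''. That is not the whole story: the current $\alpha_S-cT^+_p$ is \emph{not} cohomologous to zero in general (its class lies in $H^+$, not in $0$), so the superpotential identity $\langle d_p^{-n}(f^n)^*(\,\cdot\,),\varphi\rangle=d_p^{-n}\Uc_{\,\cdot\,}((f^n)_*dd^c\varphi)$ does not apply directly to it. The paper makes this explicit by introducing a smooth form $\beta_S$ cohomologous to $cT^+_p$ and splitting once more:
\[
\alpha_S-cT^+_p=(\alpha_S-\beta_S)+(\beta_S-cT^+_p).
\]
The piece $\beta_S-cT^+_p$ lies in $\Dc^0_p$ and has continuous (hence bounded) superpotential, so your $(d'/d_p)^n$ argument handles it. The piece $\alpha_S-\beta_S$ is a smooth closed form whose class is in $H^+$, and its decay under $d_p^{-n}(f^n)^*$ comes from the fact that the spectral radius of $d_p^{-1}f^*$ on $H^+$ is strictly less than $1$ (the paper invokes \cite[Lemma 5.7]{DNV}, which gives a bound of the form $nA\sigma^n\|dd^c\varphi\|_*$). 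This second mechanism is independent of the superpotential/mass-growth estimate you cite; once you add it, your argument is complete and matches the paper.
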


\begin{proof}[Proof of Theorem \ref{thm:equi_bdd}] Actually, the proof is essentially the same as that of Lemma \ref{lem:smooth_conv}. Lemma \ref{lem:smooth_conv} implies that $d_p^{-n}(f^n)^*(\alpha_S)$ converges to $cT^+_p$ where $c={\langle \alpha, T^-_p\rangle}/{\langle \omega^{k-p}, T^-_p\rangle}$.	For a smooth test form $\varphi$ of bidegree $(k-p, k-p)$, we have
	\begin{align*}
		\langle d_p^{-n}(f^n)^*(S)-cT^+_p, \varphi\rangle&=\langle d_p^{-n}(f^n)^*(S-\alpha_S), \varphi\rangle+\langle d_p^{-n}(f^n)^*(\alpha_S)-cT^+_p, \varphi\rangle\\
		&=d_p^{-n}\langle (f^n)^*(S-\alpha_S), \varphi\rangle+\langle d_p^{-n}(f^n)^*(\alpha_S-cT^+_p), \varphi\rangle.
	\end{align*}
	We consider the first term. Since $\varphi$ is smooth, it can be expressed as
	\begin{align*}
		d_p^{-n}\langle (f^n)^*(S-\alpha_S), \varphi\rangle=d_p^{-n}\Uc_{(f^n)^*(S-\alpha_S)}(dd^c\varphi).
	\end{align*}
	By Proposition \ref{prop:ext_to_conti}, we have
	\begin{align*}
		|d_p^{-n}\langle (f^n)^*(S-\alpha_S), \varphi\rangle|\le d_p^{-n}c_1M\|(f^n)_*(dd^c\varphi)\|_*\le d_p^{-n}c_2M\|\varphi\|_{C^2}\|(f^n)_*(\omega^{k-p+1})\|_*
	\end{align*}
	for some $c_1, c_2>0$. Since the dynamical degree $d_{p-1}$ of order ${p-1}$ of $f$ is strictly smaller than $d_p$, there exists $d_{p-1}<d'<d_p$ such that for all sufficiently large $n\in\N$, $\|(f^n)_*(\omega^{k-p+1})\|_*\le (d')^n$. Hence, we have
	\begin{align*}
		|d_p^{-n}\langle (f^n)^*(S-\alpha_S), \varphi\rangle|\le c_2M\|\varphi\|_{C^2}\left(\frac{d'}{d_p}\right)^n
	\end{align*}
	for all sufficiently large $n\in\N$.
	\medskip
	
	Concerning the second term, let $\beta_S$ be a smooth closed $(p, p)$-form cohomologous to $cT^+_p$. The current $cT^+_p-\beta_S$ admits continuous superpotentials. Applying the same argument as previously, we obtain that there exist constants $d_{p-1}<d''<d_p$ and $M_{T^+_p}>0$ such that 
	$$|d_p^{-n}\langle (f^n)^*(cT^+_p-\beta_S), \varphi\rangle|\le c_2M_{T^+_p}\|\varphi\|_{C^2}\left(\frac{d''}{d_p}\right)^n$$
	is true for all sufficiently large $n\in\N$. Also, $\alpha_S-\beta_S$ belongs to a class in $H^+$ in Lemma \ref{lem:decomposition_cohom}. Then, according to the proof of \cite[Lemma 5.7]{DNV}, the superpotential $\Uc_n$ of $d^{-n}(f^n)^*(\alpha_S-\beta_S)$ satisfies
	\begin{displaymath}
		|\langle d_p^{-n}\langle (f^n)^*(\alpha_S-\beta_S), \varphi \rangle|=|\Uc_n(dd^c\varphi)|\le nA\sigma^n\|dd^c\varphi\|_*
	\end{displaymath}
	for some constants $A>0$ and $0<\sigma<1$. Hence, putting the three estimates in the above altogether, we get the desired convergence with the desired speed of convergence.
\end{proof}

\section{Non-pluripolar products of currents}\label{sec:non-pluripolar}

In this section, we briefly recall the notion of the non-pluripolar product. For the detail of non-pluripolar products, we refer the reader to \cite{BEGZ}.

Locally, the non-pluripolar product is defined as below. Let $u_1, \cdots, u_s$ be psh functions on the complex manifold $X$. We define
$$ O_k:=\bigcap_{j=1}^s\{u_j>-k\}.$$

\begin{definition}
	If $u_1, \cdots, u_s$ are psh functions on the complex manifold $X$, we shall say that the non-pluripolar product $\langle \bigwedge_{j=1}^s dd^c u_j\rangle$ is well defined on $X$ if for each compact subset $K$ of $X$, we have
	\begin{align*}
		\sup_k\int_{K\cap O_k}\omega^{n-s}\wedge \bigwedge_{j=1}^sdd^c\max\{u_j, -k\}<\infty.
	\end{align*}
	Then, the non-pluripolar product $\langle \bigwedge_{j=1}^s dd^c u_j\rangle$ is defined to be a current satisfying the following:
	\begin{enumerate}
		\item $\mathbf{1}_{O_k}\langle \bigwedge_{j=1}^s dd^c u_j\rangle=\mathbf{1}_{O_k}\bigwedge_{j=1}^s dd^c \max\{u_j, -k\}$;
		\item the current $\langle \bigwedge_{j=1}^s dd^c u_j\rangle$ has no mass on the pluripolar set
		$$ X\setminus \bigcup_{k=1}^\infty O_k=\{u=-\infty\}.$$
	\end{enumerate}
\end{definition}
Here, $\omega$ is a strictly positive $(1, 1)$-form on $X$ with respect to which the mass $\|\cdot\|$ of a current is measured. The condition in the above is independent of the choice of $\omega$. 

An important property is that the non-pluripolar product of globally defined currents on compact K\"ahler manifolds is always well defined.

\begin{theorem}[Proposition 1.6 and Theorem 1.8 in \cite{BEGZ}]
	Let $R_1, \cdots, R_s$ be closed positive $(1, 1)$-currents on a compact K\"ahler manifold $X$. Then their non-pluripolar product $\langle R_1\wedge \cdots \wedge R_s\rangle$ is a well-defined closed positive $(s, s)$-current.
\end{theorem}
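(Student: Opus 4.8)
The plan is to follow \cite[Proposition~1.6 and Theorem~1.8]{BEGZ}: reduce to bounded potentials by truncation, bound the masses of the truncated Bedford--Taylor products by a cohomological identity, invoke the local construction recalled above to obtain the limiting current, and then establish closedness. Write $n=\dim_\C X$. For the reduction to bounded potentials: for each $j$ fix a smooth closed real $(1,1)$-form $\theta_j$ with $\{\theta_j\}=\{R_j\}$; by the $dd^c$-lemma on compact K\"ahler manifolds we may write $R_j=\theta_j+dd^c\varphi_j$ with $\varphi_j$ a globally defined $\theta_j$-psh function normalized by $\sup_X\varphi_j=0$. Choose $A_j\ge 0$ so large that $\hat\theta_j:=\theta_j+A_j\omega$ is a semi-positive smooth form, and set $\hat R_j:=\hat\theta_j+dd^c\varphi_j=R_j+A_j\omega$, a closed positive $(1,1)$-current in the K\"ahler class $\{R_j\}+A_j\{\omega\}$ whose local potentials differ from those of $R_j$ by smooth functions. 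Since $\hat\theta_j\ge 0$, the truncations $\varphi_j^{(k)}:=\max\{\varphi_j,-k\}$ are $\hat\theta_j$-psh, so $\hat R_j^{(k)}:=\hat\theta_j+dd^c\varphi_j^{(k)}$ is a closed positive $(1,1)$-current with \emph{bounded} local potentials; hence the Bedford--Taylor product $\hat R_1^{(k)}\wedge\cdots\wedge\hat R_s^{(k)}$ is well defined, closed, positive, and local in the plurifine topology.

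The quantitative heart is a uniform mass bound. Since each $\varphi_j^{(k)}$ is bounded, a standard decreasing smooth regularization of the $\varphi_j^{(k)}$ together with Stokes' theorem, the continuity of the Bedford--Taylor product along decreasing sequences, and the continuity of total mass under weak limits of positive measures on the compact manifold $X$ give
\begin{align*}
\int_X \hat R_1^{(k)}\wedge\cdots\wedge \hat R_s^{(k)}\wedge\omega^{n-s}=\bigl(\{R_1\}+A_1\{\omega\}\bigr)\smile\cdots\smile\bigl(\{R_s\}+A_s\{\omega\}\bigr)\smile\{\omega\}^{n-s}=:c,
\end{align*}
a finite constant independent of $k$. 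Let $u_j$ be a local psh potential of $R_j$ and $O_k=\bigcap_{j=1}^s\{u_j>-k\}$ as in the Definition. On $O_k$ one has $\varphi_j>-k_0$ for a constant $k_0$ depending only on the $\theta_j$ (because $u_j$ and $\varphi_j$ differ by a smooth function), so for every $k'\ge k_0$ plurifine locality of the Bedford--Taylor product yields
\begin{align*}
\mathbf 1_{O_k}\bigwedge_{j=1}^s dd^c\max\{u_j,-k\}=\mathbf 1_{O_k}\bigwedge_{j=1}^s R_j\le \mathbf 1_{O_k}\bigwedge_{j=1}^s\hat R_j^{(k')}\le \bigwedge_{j=1}^s\hat R_j^{(k')},
\end{align*}
the first inequality holding because $\bigwedge_j\hat R_j^{(k')}-\bigwedge_j R_j$ is a sum of non-negative currents on $O_k$. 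Integrating against $\omega^{n-s}$ gives $\int_{K\cap O_k}\omega^{n-s}\wedge\bigwedge_j dd^c\max\{u_j,-k\}\le c$ for every $k$ and every compact $K\subset X$, which is exactly the finiteness condition of the Definition. By \cite[Proposition~1.6]{BEGZ}, the limit $\langle R_1\wedge\cdots\wedge R_s\rangle:=\lim_{k\to\infty}\mathbf 1_{O_k}\bigwedge_j dd^c\max\{u_j,-k\}$ is then a well-defined positive $(s,s)$-current, independent of the choices above, symmetric and multilinear in the $R_j$, and carrying no mass on pluripolar sets.

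The remaining step, which I expect to be the main obstacle, is closedness. One cannot simply pass to the limit in the closed currents $\hat R_1^{(k)}\wedge\cdots\wedge\hat R_s^{(k)}$: these may converge to a current strictly larger than $\langle R_1\wedge\cdots\wedge R_s\rangle$, with extra mass concentrated on the pluripolar locus $\bigcup_j\{\varphi_j=-\infty\}$ (this already occurs for $s=n$, where positive Lelong numbers create Dirac-type mass that the non-pluripolar product does not see). Instead I would induct on $s$ using the multilinearity just obtained: writing $S:=\langle R_2\wedge\cdots\wedge R_s\rangle$, which is closed with no mass on pluripolar sets by the inductive hypothesis, one checks that on each plurifinely open set $\{u_1>-k\}$ the current $\langle R_1\wedge\cdots\wedge R_s\rangle$ coincides with the honestly closed current $\bigl(\theta_1+dd^c\max\{u_1,-k\}\bigr)\wedge S$; since these sets exhaust $X$ outside a pluripolar set on which $\langle R_1\wedge\cdots\wedge R_s\rangle$ puts no mass, closedness can be extracted by a careful truncation-and-mass argument. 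This is the argument carried out in \cite[Theorem~1.8]{BEGZ}, to which I refer for the details.
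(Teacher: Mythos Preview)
The paper does not give its own proof of this statement: it is quoted verbatim as Proposition~1.6 and Theorem~1.8 of \cite{BEGZ} and used as a black box. Your proposal is a faithful sketch of the BEGZ argument (mass bound via the cohomological identity for the truncated products, then closedness by induction on $s$ using plurifine locality), so it is consistent with---indeed more detailed than---what the paper provides.
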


\section{Equidistribution for non-pluripolar product currents}

We prove our main theorem in several steps. Our standing assumption throughout this section is that $X$ and $f$ are as in Theorem \ref{thm:1st-main}. First, we start with the following observation.
\begin{proposition}\label{pro-boundedpotential} 
	Let $\theta_j$ be K\"ahler forms and $u_j$ bounded $L^1$ functions on $X$ such that $\theta_j+dd^cu_j\ge 0$ for $j=1, \cdots, p$. Then, $$d_p^{-n}(f^n)^* \left(\bigwedge_{j=1}^p(\theta_j+dd^cu_j)\right)\to c T^+_p,\quad c=\dfrac{\langle \theta_1\wedge\cdots\wedge\theta_p, T^-_p\rangle}{\langle \omega^{k-p}, T^-_p\rangle}$$
	exponentially fast in the sense of currents.
\end{proposition}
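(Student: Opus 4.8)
The plan is to show that the current $S:=\langle\bigwedge_{j=1}^p(\theta_j+\ddc u_j)\rangle$ has a \emph{bounded} superpotential and then to quote Theorem~\ref{thm:equi_bdd}. Since the $u_j$ are bounded, after modification on a null set each $u_j$ is a bounded $\theta_j$-psh function, so $\theta_j+\ddc u_j$ is a closed positive $(1,1)$-current in the class $\{\theta_j\}$; by Bedford--Taylor theory the product $S$ is then a well-defined closed positive $(p,p)$-current, coincides with the non-pluripolar product, and lies in the class of the \emph{smooth} form $S_0:=\theta_1\wedge\cdots\wedge\theta_p$ (see \cite{BEGZ}). I would apply Theorem~\ref{thm:equi_bdd} with $\alpha_S:=S_0$, so it suffices to prove that $|\Uc_{S-S_0}(R_0)|\le CM\,\|R_0\|_*$ for every smooth $R_0\in\widetilde{\Dc}^0_{k-p+1}$, where $M:=\max_{j}\|u_j\|_{L^\infty}$ and $C$ depends only on $X$, $\omega$ and the classes $\{\theta_j\}$.

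To get this bound I would first use the telescoping identity for Bedford--Taylor products of currents with bounded local potentials to write $S-S_0=\ddc W$ with $W:=\sum_{j=1}^p u_j\Lambda_j$, where $\Lambda_j:=\bigl(\bigwedge_{i<j}(\theta_i+\ddc u_i)\bigr)\wedge\bigl(\bigwedge_{i>j}\theta_i\bigr)$ is a closed positive $(p-1,p-1)$-current. For smooth $R_0$, choosing a smooth potential $U_{R_0}$ with $\ddc U_{R_0}=R_0$ and integrating by parts gives $\Uc_{S-S_0}(R_0)=\langle\ddc W,U_{R_0}\rangle=\langle W,R_0\rangle=\sum_{j=1}^p\int_X u_j\,(\Lambda_j\wedge R_0)$, where $\Lambda_j\wedge R_0$ is an honest signed measure because $R_0$ is smooth. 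Now fix $j$ and any decomposition $R_0=R_0^+-R_0^-$ into closed positive currents: wedging $R_0^\pm$ successively with the smooth factors of $\Lambda_j$ and then with its bounded-potential $(1,1)$-factors (iterated Bedford--Taylor) produces a well-defined positive measure $\Lambda_j\wedge R_0^\pm$ that is compatible with the classical wedge, so $\Lambda_j\wedge R_0=\Lambda_j\wedge R_0^+-\Lambda_j\wedge R_0^-$. The total mass of $\Lambda_j\wedge R_0^\pm$ is the cohomological number $\{\Lambda_j\}\smile\{R_0^\pm\}=\bigl(\prod_{i\ne j}\{\theta_i\}\bigr)\smile\{R_0^\pm\}$, and since $\bigwedge_{i\ne j}\theta_i\le C_0\,\omega^{p-1}$ pointwise for a constant $C_0$ depending only on the $\theta_i$ and $\omega$, this mass is $\le C_0\|R_0^\pm\|$. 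Hence $|\Lambda_j\wedge R_0|(X)\le C_0(\|R_0^+\|+\|R_0^-\|)$, and taking the infimum over decompositions yields $|\Lambda_j\wedge R_0|(X)\le C_0\|R_0\|_*$. Summing over $j$ gives $|\Uc_{S-S_0}(R_0)|\le\sum_j\|u_j\|_{L^\infty}\,|\Lambda_j\wedge R_0|(X)\le pC_0 M\,\|R_0\|_*$, as required.

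Once this is established, Theorem~\ref{thm:equi_bdd} (with $\alpha_S=S_0$) gives $d_p^{-n}(f^n)^*S\to cT^+_p$ exponentially fast in the sense of currents with $c=\langle S_0,T^-_p\rangle/\langle\omega^p,T^-_p\rangle=\langle\theta_1\wedge\cdots\wedge\theta_p,T^-_p\rangle/\langle\omega^p,T^-_p\rangle$, which is the assertion. The one point that needs genuine care — and the main obstacle — is the estimate in the second paragraph: Theorem~\ref{thm:equi_bdd} requires the $*$-norm $\|R_0\|_*$ on the right-hand side, not a $C^0$- or $L^1$-norm, and what makes this available is precisely that each $\Lambda_j$, while singular, is an admissible Bedford--Taylor product and hence can be wedged with an arbitrary closed positive current; this is what lets one split $R_0$ into positive parts and estimate through the cohomological formula for the masses. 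The remaining ingredients (that $u_j$ is $\theta_j$-psh after modification, the telescoping identity, the validity of the integration by parts on the compact manifold $X$, and the compatibility of the Bedford--Taylor wedge with the classical one) are standard.
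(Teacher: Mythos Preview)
Your proof is correct and follows the same route as the paper: reduce to Theorem~\ref{thm:equi_bdd} by showing that $S-\alpha_S$ has a bounded superpotential, which in turn is proved via the same telescoping identity $S-S_0=\ddc\sum_j u_j\Lambda_j$ (this is exactly Proposition~\ref{prop:bounded_(1,1)} of the paper). Your treatment of the mass estimate---wedging the Bedford--Taylor factors of $\Lambda_j$ into an arbitrary decomposition $R_0=R_0^+-R_0^-$ and reading off the masses cohomologically---is precisely what the paper compresses into the phrase ``direct computations with cohomological arguments,'' so you have in fact supplied the details the paper omits.
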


By Theorem \ref{thm:equi_bdd}, it is enough to observe that $\bigwedge_{j=1}^p(\theta_j+dd^cu_j)-\bigwedge_{j=1}^p\theta_j$ admits bounded superpotentials, as we will see below:

	\begin{proposition}\label{prop:bounded_(1,1)}
	There exists a constant $C>0$ independent of the family $(u_j)_{j=1, \cdots, p}$ such that for $R\in\widetilde{\Dc}^0_{k-p+1}$
	\begin{displaymath}
		\left|\Uc_{(\bigwedge_{j=1}^p(\theta_j+dd^cu_j)-\bigwedge_{j=1}^p\theta_j)}(R)\right|\le C\left(\sup_j\|u_j\|_\infty\right)\|R\|_*.
	\end{displaymath}
\end{proposition}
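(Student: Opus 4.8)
\emph{Overall strategy.} The plan is to exhibit the difference as $dd^c$ of an explicit current whose $L^\infty$-size is governed by $\sup_j\|u_j\|_\infty$, and then read off the superpotential bound. Set $T_j:=\theta_j+dd^cu_j$, a closed positive $(1,1)$-current with bounded local potentials, so that every wedge product below is a well-defined Bedford--Taylor product. Replacing the factors $\theta_j$ by $T_j$ one at a time gives the telescoping identity
\begin{align*}
\bigwedge_{j=1}^p(\theta_j+dd^cu_j)-\bigwedge_{j=1}^p\theta_j=\sum_{i=1}^p S_i\wedge(T_i-\theta_i)=\sum_{i=1}^p S_i\wedge dd^cu_i,\qquad S_i:=\Big(\bigwedge_{j<i}T_j\Big)\wedge\Big(\bigwedge_{j>i}\theta_j\Big),
\end{align*}
where each $S_i$ is a closed positive $(p-1,p-1)$-current (a legitimate iterated Bedford--Taylor product, being built one $(1,1)$-factor of bounded potential at a time). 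Since $S_i$ is closed and $u_i$ is bounded, $dd^c(u_iS_i)=S_i\wedge dd^cu_i$ in the Bedford--Taylor sense, so $\Delta:=\bigwedge_{j=1}^p(\theta_j+dd^cu_j)-\bigwedge_{j=1}^p\theta_j=dd^cU_\Delta$ with $U_\Delta:=\sum_{i=1}^p u_iS_i$; also $\{\Delta\}=0$ and $\{S_i\}=\prod_{j\ne i}\{\theta_j\}$ because each $dd^cu_j$ is exact.

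\emph{Reading off the superpotential.} Because $\{\Delta\}=0$, for a smooth $R\in\widetilde{\Dc}^0_{k-p+1}$ I would compute the superpotential with the primitive $U_\Delta$ (legitimate since $R$ is $dd^c$-exact, so the choice of primitive of $\Delta$ is immaterial):
\begin{align*}
\Uc_\Delta(R)=\langle U_\Delta,R\rangle=\sum_{i=1}^p\langle u_iS_i,R\rangle=\sum_{i=1}^p\int_X u_i\,(S_i\wedge R).
\end{align*}
Next I would fix a $*$-optimal decomposition $R=R^+-R^-$ with $R^\pm\in\Cc_{k-p+1}$ and $\|R^+\|+\|R^-\|=\|R\|_*$. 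Since $S_i$ is assembled from bounded-potential $(1,1)$-currents and smooth forms, the products $S_i\wedge R^\pm$ are well-defined positive measures and $S_i\wedge R=S_i\wedge R^+-S_i\wedge R^-$, whence
\begin{align*}
\Big|\int_X u_i\,(S_i\wedge R)\Big|\le\|u_i\|_\infty\big(\|S_i\wedge R^+\|+\|S_i\wedge R^-\|\big)=\|u_i\|_\infty\big(\{S_i\}\smile\{R^+\}+\{S_i\}\smile\{R^-\}\big),
\end{align*}
the last equality because the mass of the wedge of two closed positive currents depends only on their cohomology classes.

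\emph{Cohomological estimate and conclusion.} Fix $C_0\ge1$ with $C_0\{\omega\}-\{\theta_j\}$ K\"ahler for every $j$ (possible since the $\theta_j$ are finitely many fixed K\"ahler forms). Telescoping in cohomology, $\prod_{j\ne i}(C_0\{\omega\})-\prod_{j\ne i}\{\theta_j\}$ is a sum of products of K\"ahler classes, hence pairs non-negatively with the class $\{R^\pm\}$ of the closed positive current $R^\pm$; therefore $0\le\{S_i\}\smile\{R^\pm\}\le C_0^{p-1}\{\omega^{p-1}\}\smile\{R^\pm\}=C_0^{p-1}\|R^\pm\|$. Summing over $i=1,\dots,p$ and taking the infimum over decompositions of $R$ yields $|\Uc_\Delta(R)|\le p\,C_0^{p-1}\big(\sup_j\|u_j\|_\infty\big)\|R\|_*$, so $C:=p\,C_0^{p-1}$ works; it depends only on $X,\omega$ and the fixed forms $\theta_1,\dots,\theta_p$, not on the family $(u_j)$.

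\emph{Main obstacle.} The one genuinely delicate step is the identity $S_i\wedge R=S_i\wedge R^+-S_i\wedge R^-$: the pairing $\langle u_iS_i,R\rangle$ is unambiguous because $R$ is smooth, but to profit from the $*$-norm one must split it against the possibly non-smooth closed positive parts $R^\pm$, which requires the well-definedness, positivity and additivity of the higher-bidegree Bedford--Taylor products $S_i\wedge R^\pm$. I would settle this by writing $S_i$ as an iterated product against $(1,1)$-currents of bounded potential (each such wedge against an arbitrary closed positive current of complementary dimension being legitimate) and then passing to the limit along a regularization $R^\pm_m\to R^\pm$ by smooth forms: continuity of Bedford--Taylor products handles the convergence, and the relevant masses $\{S_i\}\smile\{R^\pm_m\}$, being cohomological, are automatically bounded and convergent, which keeps the limiting argument clean. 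Everything else --- the telescoping identity and the cohomological comparison $\{\theta_j\}\le C_0\{\omega\}$ --- is routine.
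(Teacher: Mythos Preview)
Your proof is correct and follows essentially the same route as the paper: telescope the difference to exhibit an explicit $(p-1,p-1)$-potential $U_\Delta=\sum_i u_i S_i$, then bound $\langle U_\Delta,R\rangle$ cohomologically. The paper's own argument is the two-line version --- it records the identical telescoping identity (with the opposite ordering of factors, $S_i=\big(\bigwedge_{j<i}\theta_j\big)\wedge\big(\bigwedge_{j>i}T_j\big)$, which is immaterial) and then simply writes ``Direct computations with cohomological arguments complete the proof''; you have supplied exactly those computations, including the one genuinely nonformal point (making sense of $S_i\wedge R^\pm$ for the non-smooth optimal pieces $R^\pm$ via iterated Bedford--Taylor wedges against bounded-potential $(1,1)$-currents), which the paper leaves implicit.
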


\begin{proof}
	We have
	\begin{align*}
		\bigwedge_{j=1}^p(\theta_j+dd^cu_j)-\bigwedge_{j=1}^p\theta_j=dd^c\sum_{i=1}^{p}u_i\left(\bigwedge_{j=1}^{i-1}\theta_j\right)\wedge \left(\bigwedge_{j=i+1}^{p}(\theta_j+dd^cu_j)\right).
	\end{align*}
	Hence, $\sum_{i=1}^{p}u_i\left(\bigwedge_{j=1}^{i-1}\theta_j\right)\wedge \left(\bigwedge_{j=i+1}^{p}(\theta_j+dd^cu_j)\right)$ is a potential of $\bigwedge_{j=1}^p(\theta_j+dd^cu_j)-\bigwedge_{j=1}^p\theta_j$. Direct computations with cohomological arguments complete the proof.
\end{proof}

In the second step, we approximate the non-pluripolar product by currents of the form in Proposition \ref{pro-boundedpotential} outside of the pluripolar set associated with the currents $R_1, \cdots, R_p$. Indeed, Lemma \ref{le-psinho} implies that our approximation is valid as the non-pluripolar product does not charge any mass on the pluripolar set. We first prove two preparatory lemma and proposition.

\begin{lemma} \label{le-lemmaFhobatbien} Let $\mathcal{F}$ be a set of closed positive $(p,p)$-currents $S$ such that $S \le T^+_p$ and $\{S\} \in L^+$ (for $L^+$, see Section \ref{sec:simple_action}). Assume that for each $S\in \mathcal{F}$, there exists $S'\in\mathcal{F}$ such that $d_p^{-1}f^*(S')= S$. Then for every $S \in \mathcal{F}$ there exists a constant $\lambda_S \ge 0$ such that $S= \lambda_S T^+_p$. 
\end{lemma}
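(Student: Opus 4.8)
\textbf{Proof strategy for Lemma \ref{le-lemmaFhobatbien}.}

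The plan is to exploit the key structural facts assembled in Section \ref{sec:simple_action}: that $L^+$ is one-dimensional, that $d_p^{-1}f^*$ fixes the generating class $c^+$ with the spectral radius on the complementary hyperplane $H^+$ strictly less than $1$, and that $T^+_p$ has continuous superpotentials while admitting the decomposition $d_p^{-1}f^*(T^+_p)=T^+_p$. Fix $S\in\mathcal{F}$. Since $\{S\}\in L^+$ and $\dim L^+=1$, we have $\{S\}=\lambda_S\{T^+_p\}$ for some $\lambda_S\in\R$, and $\lambda_S\ge 0$ because $0\le S\le T^+_p$ forces $0\le \langle S,\omega^{k-p}\rangle\le \langle T^+_p,\omega^{k-p}\rangle$ with $\langle T^+_p,\omega^{k-p}\rangle>0$. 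It remains to prove that the closed current $S-\lambda_S T^+_p$, which is cohomologous to zero and has continuous superpotential (as a difference of currents with continuous superpotentials, using that $T^+_p$ has one by Proposition \ref{prop:construction_Green} and $S\le T^+_p$ forces $S$ to have one as well), actually vanishes.

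The mechanism is the standard contraction argument already used twice in this paper (in the proofs of Lemma \ref{lem:smooth_conv} and Theorem \ref{thm:equi_bdd}). For a smooth test form $\varphi$ of bidegree $(k-p,k-p)$, write
\begin{align*}
	\langle S-\lambda_S T^+_p, \varphi\rangle = \Uc_{S-\lambda_S T^+_p}(dd^c\varphi).
\end{align*}
By hypothesis, for each $n$ there is $S_n\in\mathcal{F}$ with $d_p^{-n}(f^n)^*(S_n)=S$ — iterating the defining property of $\mathcal{F}$ — and $\{S_n\}=\lambda_S\{T^+_p\}$ by the same one-dimensionality argument applied with $c^+$ fixed by $d_p^{-1}f^*$; indeed, $\lambda_S T^+_p = d_p^{-n}(f^n)^*(\lambda_S T^+_p)$, so $S-\lambda_S T^+_p = d_p^{-n}(f^n)^*(S_n-\lambda_S T^+_p)$. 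Then, using Proposition \ref{prop:img_conti_suppot},
\begin{align*}
	\langle S-\lambda_S T^+_p, \varphi\rangle = d_p^{-n}\,\Uc_{S_n-\lambda_S T^+_p}\bigl((f^n)_*(dd^c\varphi)\bigr).
\end{align*}
The superpotentials of the currents $S_n-\lambda_S T^+_p$ must be shown to be uniformly bounded on a fixed $*$-bounded set: each $S_n$ satisfies $0\le S_n\le T^+_p$, so $S_n-\lambda_S T^+_p$ lies in a $*$-bounded family, and continuous superpotentials on such a family are uniformly bounded — this is the part to state carefully, invoking the compactness properties from \cite[Lemma 2.4.3, Proposition 3.2.8]{DS10} and Proposition \ref{prop:ext_to_conti} just as in Theorem \ref{thm:equi_bdd}. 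Combining this with the bound $\|(f^n)_*(dd^c\varphi)\|_*\lesssim \|\varphi\|_{C^2}\,(d')^n$ for some $d_{p-1}<d'<d_p$ (since the norm of $f_*$ on $\Dc_{k-p+1}$ is controlled by $d_{p-1}$), we get
\begin{align*}
	\bigl|\langle S-\lambda_S T^+_p, \varphi\rangle\bigr| \lesssim \|\varphi\|_{C^2}\,(d'/d_p)^n \xrightarrow[n\to\infty]{} 0,
\end{align*}
hence $S=\lambda_S T^+_p$.

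The main obstacle I anticipate is the uniform boundedness of the superpotentials $\Uc_{S_n-\lambda_S T^+_p}$ on a $*$-bounded test set, independently of $n$. One must know that the family $\{S_n\}$ — which is $*$-bounded by the squeeze $0\le S_n\le T^+_p$ — has superpotentials that are equicontinuous, or at least uniformly bounded on the relevant sets, rather than merely individually continuous; the natural route is to note that $S_n\le T^+_p$ gives $\Uc_{S_n}\le \Uc_{T^+_p}$ up to cohomological corrections (monotonicity of superpotentials under the order relation among currents in the same class direction), together with a lower bound coming from $S_n\ge 0$, so that $\Uc_{S_n}$ is pinched between the two continuous functions $\Uc_{T^+_p}$ and $\Uc_0=0$ on $*$-bounded sets. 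Making this pinching precise, accounting for the cohomology-class bookkeeping (the $S_n$ are not in $\Dc^0_p$, so one works with $S_n-\alpha_{S_n}$ for suitable smooth representatives), is the technical heart; everything else is the by-now routine contraction estimate.
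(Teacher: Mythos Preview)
Your approach is correct and essentially identical to the paper's: both iterate the hypothesis to write $S-\lambda_S T^+_p = d_p^{-n}(f^n)^*(S_n-\lambda_S T^+_p)$, pass via superpotentials to $(f^n)_*$ on the test side, and conclude from a uniform bound on $\Uc_{S_n-\lambda_S T^+_p}$, which the paper obtains by citing \cite[Remark~2.6]{DNV}---precisely the pinching $0\le S_n\le T^+_p$ against a current with bounded superpotential that you sketch. One small slip: $S\le T^+_p$ yields a \emph{bounded} superpotential for $S$, not a continuous one (so invoke the Remark after Proposition~\ref{prop:img_conti_suppot} rather than the proposition itself), but since you only ever use boundedness this does not affect the argument.
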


\proof The proof was implicitly contained in the proof of \cite[Proposition 5.9]{DNV}. We recall it for reader's convenience. 
\medskip

Let $S \in \mathcal{F}$ be a current. Then, $\{ S\} \in L^+$ and $\dim L^+=1$ imply that $\{S\}= \lambda_S  \{T^+_p\}$ for some constant $\lambda_S\in\R$. From the hypothesis on $\mathcal{F}$, there exists a sequence $(S_m)_{m\in\N}$ of currents in $\mathcal{F}$ such that 
$$S- \lambda_S T^+_p = d_p^{-m} (f^m)^*(S_m - \lambda_S T^+_p),$$
for every $m \in \N$. Hence 
$$\mathcal{U}_{S- \lambda_S T^+_p}(R)= \mathcal{U}_{S_m- \lambda_S T^+_p}\big(d_p^{-m} (f^m)_*(R) \big)$$
for every $R \in \widetilde{\Dc}^0_{k-p+1}(X)$. Recall that 
$$\|d_p^{-m} (f^m)_*(R)\|_* \le \delta^m,$$
for some constant $\delta \in (0,1)$ independent of $m$ and $R$. Observe that $S_m$ and $\lambda_ST^+_p$ belong to the same cohomology class. Since for every $m\in\N$, $0\le S_m\le T^+_p$ and $T^+_p$ admits a bounded superpotential, \cite[Remark 2.6]{DNV} implies that
$$ |\mathcal{U}_{S- \lambda_S T^+_p}(R)|=|\Uc_{S_m-\lambda_ST^+_p}(d_p^{-m}(f^m)_*(R))|\le c\delta^m $$
for some constant $c>0$. Letting $m \to \infty$, we infer that $\Uc_{S- \lambda_S T^+_p}=0$ on $\widetilde{\Dc}^0_{k-p+1}(X)$. In other words, $S= \lambda_S T^+_p$. This finishes the proof.
\endproof

\begin{proposition} \label{pro-SphayS} Let $S\in\Cc_p$ be a current. Assume that $d_p^{-n} (f^n)^* S \to T^+_p$ as $n \to \infty$. Let $S'$ be a closed positive $(p, p)$-current with $S' \le S$. Let $\psi$ be a quasi-psh function such that $0\le \psi \le 1$.  Then, we have $d_p^{-n} (f^n)^* (\psi S') \to c' T^+_p$ where $c'= \langle T^-_p, \psi S' \rangle / \langle T^-_p, \omega^{p} \rangle $. 
\end{proposition}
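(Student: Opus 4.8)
The plan is to show that every weak limit point of the sequence $\big(d_p^{-n}(f^n)^*(\psi S')\big)_n$ equals $c'T^+_p$; since $0\le d_p^{-n}(f^n)^*(\psi S')\le d_p^{-n}(f^n)^*S$ and the masses of $d_p^{-n}(f^n)^*S$ converge to $\|T^+_p\|$, the sequence is relatively compact, so this will suffice. The first point is that, although $\psi S'$ is not closed, $\psi$ is bounded, so by Bedford--Taylor theory $dd^c(\psi S')=dd^c\psi\wedge S'$; choosing a Kähler form $\theta$ with $\theta+dd^c\psi\ge 0$ gives $dd^c(\psi S')=(\theta+dd^c\psi)\wedge S'-\theta\wedge S'$, a difference of closed positive $(p+1,p+1)$-currents lying in the common class $\{\theta\}\smile\{S'\}$; hence $dd^c(\psi S')\in\Dc_{p+1}$ has zero class. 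Since $dd^c$ commutes with holomorphic pull-back and the spectral radius of $f^*$ on $H^{p+1,p+1}(X,\R)$ is $d_{p+1}<d_p$, the $*$-norm of $dd^c\big(d_p^{-n}(f^n)^*(\psi S')\big)=d_p^{-n}(f^n)^*\big(dd^c(\psi S')\big)$ is $\Oc\big((d_{p+1}/d_p+\epsilon)^n\big)\to 0$ for small $\epsilon>0$. Consequently any weak limit point $\Theta$ of $\big(d_p^{-n}(f^n)^*(\psi S')\big)_n$ is closed.

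Next I would run a rigidity argument through Lemma \ref{le-lemmaFhobatbien}. Let $\Fc$ be the set of all weak limit points of $\big(d_p^{-n}(f^n)^*(\psi S')\big)_n$; each $\Theta\in\Fc$ is closed and positive, and, since $d_p^{-n}(f^n)^*S-d_p^{-n}(f^n)^*(\psi S')=d_p^{-n}(f^n)^*\big((S-S')+(1-\psi)S'\big)\ge 0$ converges to $T^+_p-\Theta$ along a suitable subsequence, $\Theta\le T^+_p$. The set $\Fc$ is stable under $d_p^{-1}f^*$-preimages: if $\Theta=\lim_j d_p^{-n_j}(f^{n_j})^*(\psi S')$, then (after passing to a subsequence) a limit point $\Theta_{-1}$ of $\big(d_p^{-(n_j-1)}(f^{n_j-1})^*(\psi S')\big)_j$ lies in $\Fc$ and satisfies $d_p^{-1}f^*\Theta_{-1}=\Theta$; iterating yields $\Theta=d_p^{-m}(f^m)^*\Theta_{-m}$ with $\Theta_{-m}\in\Fc$, so $\{\Theta\}=(d_p^{-1}f^*)^m\{\Theta_{-m}\}$ with $\{\Theta_{-m}\}$ in a fixed bounded subset of $H^{p,p}(X,\R)$ (because $0\le\Theta_{-m}\le T^+_p$). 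Letting $m\to\infty$ and using that $d_p^{-1}f^*$ fixes $L^+$ and strictly contracts $H^+$ (Lemma \ref{lem:decomposition_cohom}), we get $\{\Theta\}\in L^+$. Thus $\Fc$ satisfies the hypotheses of Lemma \ref{le-lemmaFhobatbien}, and every $\Theta\in\Fc$ equals $\lambda_\Theta T^+_p$ for some $\lambda_\Theta\ge 0$.

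To identify the constant and conclude, it remains to prove that
\[
\big\langle d_p^{-n}(f^n)^*(\psi S'),\,\omega^{k-p}\big\rangle=\big\langle\psi S',\,d_p^{-n}(f^n)_*\omega^{k-p}\big\rangle\ \longrightarrow\ \langle\psi S',\,T^-_p\rangle .
\]
Granting this, testing each limit point $\lambda_\Theta T^+_p$ against $\omega^{k-p}$ gives $\lambda_\Theta\langle T^+_p,\omega^{k-p}\rangle=\langle\psi S',T^-_p\rangle$, and since $\langle T^+_p,\omega^{k-p}\rangle=\langle\omega^p,T^-_p\rangle$ (both equal $\langle c^+,c^-\rangle$, by a short computation using $d_p^{-1}f^*c^+=c^+$, $d_p^{-1}f_*c^-=c^-$ and the vanishing of $\langle c^+,\cdot\rangle$ on $H^-$), every $\lambda_\Theta$ equals $c'$, whence $d_p^{-n}(f^n)^*(\psi S')\to c'T^+_p$. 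For the displayed convergence, set $\Gamma_n:=d_p^{-n}(f^n)_*\omega^{k-p}$; by Proposition \ref{prop:img_conti_suppot} these have continuous superpotentials and $\Gamma_n\to T^-_p$. In the notation of Section \ref{sec:simple_action}, $\{\Gamma_n\}-\{T^-_p\}=\{\beta_n\}$ with $\|\beta_n\|_\infty=\Oc(\delta^n)$, so $\rho_n:=\Gamma_n-T^-_p-\beta_n$ has zero class and a continuous superpotential; writing $\rho_n=dd^cW_n$ and integrating by parts,
\[
\langle\psi S',\Gamma_n\rangle=\langle\psi S',T^-_p\rangle+\langle\psi S',\beta_n\rangle+\langle dd^c(\psi S'),W_n\rangle ,
\]
where $\langle\psi S',\beta_n\rangle\to 0$ since $\|\beta_n\|_\infty\to 0$, and where, inserting a fixed smooth representative $\alpha$ of $\{\theta\}\smile\{S'\}$ and using the decomposition of $dd^c(\psi S')$ from the first paragraph together with the reciprocity of superpotentials, $\langle dd^c(\psi S'),W_n\rangle=\Uc_{\rho_n}\big((\theta+dd^c\psi)\wedge S'-\alpha\big)-\Uc_{\rho_n}\big(\theta\wedge S'-\alpha\big)\to 0$ because the superpotentials of $\rho_n$ tend to $0$ uniformly on $*$-bounded sets.

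I expect the displayed limit in the last step to be the main obstacle. Because $\psi S'$ is not closed, the pairing $\langle\psi S',\Gamma_n\rangle$ is not determined by cohomology classes, and $\psi S'$ carries in general no potential with any continuity, so one cannot invoke continuity of superpotentials of $S'$ directly. What makes it work is that $dd^c(\psi S')$ is still a difference of closed positive currents, which allows the remainder $\langle dd^c(\psi S'),W_n\rangle$ to be controlled through the superpotential of $\rho_n=\Gamma_n-T^-_p-\beta_n$ rather than through $S'$; carrying out this reciprocity carefully in the present (mildly non-smooth) situation, and checking that $d_p^{-n}(f^n)_*\omega^{k-p}\to T^-_p$ in the strong superpotential sense, is the heart of the matter.
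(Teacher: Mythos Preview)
Your overall strategy coincides with the paper's: collect all weak limit points into a family $\Fc$, show each element is closed, positive, dominated by $T^+_p$, with class in $L^+$, and then invoke Lemma \ref{le-lemmaFhobatbien}; finally identify the constant by pairing against $\omega^{k-p}$ and pushing forward. Your treatment of the cohomology step ($\{\Theta\}\in L^+$) and of the constant (via the superpotential of $\rho_n=\Gamma_n-T^-_p-\beta_n$) is correct and in fact more explicit than the paper's.

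There is, however, a genuine gap in the closedness step. You show that $dd^c\big(d_p^{-n}(f^n)^*(\psi S')\big)\to 0$ in $*$-norm, which only yields $dd^c\Theta=0$. This does \emph{not} imply $d\Theta=0$: on any compact K\"ahler manifold there exist positive $(1,1)$-forms that are $\partial\bar\partial$-closed but not $d$-closed (Gauduchon/pluriclosed metrics give explicit examples already on K\"ahler surfaces), and the situation is no better in higher bidegree. Since the very definition of $\{\Theta\}$ and the hypothesis of Lemma \ref{le-lemmaFhobatbien} require $\Theta$ to be closed, the argument cannot proceed without this.

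The paper closes the gap by controlling $d(\psi S')=d\psi\wedge S'$ directly, citing \cite[Lemma 5.8]{DNV}. The point is a Cauchy--Schwarz estimate: for a smooth test form $\Phi$ of the right bidegree,
\[
\big|\langle \bar\partial\psi\wedge S',\Phi\rangle\big|^2 \ \lesssim\ \big\langle i\partial\psi\wedge\bar\partial\psi\wedge S',\,\Phi_1\big\rangle\cdot\big\langle S',\,\Phi_2\big\rangle,
\]
with $\Phi_1,\Phi_2$ built from $\Phi$. Since $\psi$ is bounded quasi-psh, $i\partial\psi\wedge\bar\partial\psi\wedge S'=\tfrac12\big(dd^c\psi^2-2\psi\,dd^c\psi\big)\wedge S'$ is dominated by closed positive $(p+1,p+1)$-currents of bounded mass; pulling back by $d_p^{-n}(f^n)^*$ and using $d_{p+1}<d_p$ then forces $d\Theta=0$. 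Once you insert this Cauchy--Schwarz step (or quote \cite[Lemma 5.8]{DNV}), the remainder of your proof goes through as written.
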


Here, the value $\langle T^-_p, \psi S' \rangle$ makes sense since $T_p^-$ has continuous superpotentials.

\proof Observe first that the currents $d_p^{-n} (f^n)^* (\psi S')$ have uniformly bounded mass independent of $n\in\N$. Let $\mathcal{F}$ be the set of limit currents of the family $(d_p^{-n} (f^n)^* (\psi S'))_{n\in\N}$. Clearly, every current in $\mathcal{F}$ is positive. Let $R$ be a current in $\mathcal{F}$ and $(n_j)_{j\in\N}$ a sequence such that $R= \lim_{j \to \infty} d_p^{-n_j} (f^{n_j})^* (\psi S')$. Let $R'$ be a limit current of the sequence $$(d_p^{-n_j+1} (f^{n_j-1})^* (\psi S'))_{j \in \N}.$$
Then, $d_p^{-1}f^*(R')$ is a limit current of the sequence 
$(d_p^{-n_j} (f^{n_j})^* (\psi S'))_{j\in\N}$. It follows that $R= d_p^{-1} f^* (R')$. In other words, for every $R\in\mathcal{F}$, there exists $R'\in\mathcal{F}$ such that $d_p^{-1}f^*(R')= R$.
\medskip

Since $dR= \lim_{j \to \infty} d_p^{-n_j} (f^{n_j})^* (d\psi \wedge  S')$, arguing as in the proof of \cite[Lemma 5.8]{DNV}, one gets $d R= 0$ and therefore, $R$ is closed. Using the fact that $S' \le S$ and $0\le \psi\le 1$, we infer $R \le T^+_p$.
\medskip

We check that $\{R\}\in L^+$. Let $(\theta_i)$ be a finite set of smooth closed $(k-p, k-p)$-currents such that their de Rham cohomology classes $\{\theta_i\}$'s form a basis of $H^{k-p, k-p}(X, \R)$. By replace $(\theta_i)$ by $(M\omega^{k-p}+\theta_i)$ for a sufficiently large $M>0$, we may suppose that all $\theta_i$'s are closed positive. Let $(\theta_i^*)$ be another set of smooth closed $(p, p)$-currents such that their cohomology classes $\{\theta^*_i\}$'s form a dual basis of $(\{\theta_i\})$ in the Poincar\'e duality theorem. By the Poincar\'e duality theorem, each element $F\in \mathcal{F}$ is cohomologous to a linear combination $\sum_i c_{F,i}\theta^*_i$. As shown just before, we have $F\le T^+_p$ for every $F\in\mathcal{F}$. Then, due to the positivity of $F\in\mathcal{F}$, $T^+_p$, and all $\theta_i$'s, we have 
$$0\le c_{F, i}\le c_{T^+_p, i}\quad\textrm{ for all }i \textrm{ and for all }F\in\mathcal{F}.$$
In particular, for every $F\in\mathcal{F}$, its de Rham cohomology class can be written as
\begin{align}
	\label{eq:deRham_coeff}	\{F\}=\sum c_{F, i}\{\theta^*_i\},\quad 0\le c_{F, i}\le c_{T^+, i}\quad\textrm{ for all }i.
\end{align}

Now, let $\alpha:= \{R\}$ be the de Rham cohomology class of $R$ and $\mathcal{F}'$ the image of $\mathcal{F}$ in $H^{p,p}(X,\R)$. By the surjectivity of $d^{-1}f^*$ on $\mathcal{F}'$ proven in the above, for each $m\in\N$, we can write $\alpha= d_p^{-m} (f^m)^* \alpha_m$ for some $\alpha_m \in \mathcal{F}'$. Together with the boundedness of coefficients in \eqref{eq:deRham_coeff}, \cite[Lemma 5.2]{DNV} implies $\alpha \in L^+$. 
\medskip

Finally, by Lemma \ref{le-lemmaFhobatbien} applied to $\mathcal{F}$, we obtain that $R= \lambda_R T^+_p$ for some constant $\lambda_R>0$. Furthermore, since
\begin{align*}
\int_X R \wedge \omega^{k-p}&= \lim_{j \to \infty} d_p^{-n_j} \int_X (f^{n_j})^* (\psi S') \wedge \omega^{k-p}= \lim_{j \to \infty} d_p^{-n_j} \int_X \psi S' \wedge (f^{n_j})_* (\omega^{k-p})\\
&= \int_X \psi S' \wedge T^{-}_p,
\end{align*}
we get $\lambda_R= \langle T^-_p, \psi S' \rangle / \langle T^+_p, \omega^{k-p} \rangle =\langle T^-_p, \psi S' \rangle / \langle T^-_p, \omega^{p} \rangle$ which is independent of $R$. Here, in the third and last integrals, we abused notation. Indeed, since $T^-_p$ and $(f^n)_*\omega^{k-p}$ admit continuous superpotentials, the third and last integrals makes sense. Hence, $\mathcal{F}$ consists of a single element. This finishes the proof.
\endproof

\begin{lemma}\label{le-psinho} Let $\phi$ be a negative quasi-psh function on $X$. Let $\phi_m:= m^{-1}\max\{\phi, -m\}$. Let $S$ be a closed positive $(p,p)$-current having no mass on $\{\phi=-\infty\}$. For each $m\in\N$, let $R_m$ be a limit current of the sequence $(d_p^{-n} (f^n)^*(\phi_m S))_{n\in\N}$ as $n \to \infty$. Then, one has $\lim_{m \to \infty} R_m=0$.
\end{lemma}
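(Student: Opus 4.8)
The strategy is to control each $R_m$ via Proposition~\ref{pro-SphayS} and then let $m \to \infty$. First I would note that $\phi_m$ is quasi-psh with $-1 \le \phi_m \le 0$, and that $\phi_m S$ is a positive current dominated by $S$. However, to apply Proposition~\ref{pro-SphayS} directly we need a current $\widetilde S \in \Cc_p$ with $\phi_m S \le \widetilde S$ and with $d_p^{-n}(f^n)^* \widetilde S \to T^+_p$; the natural candidate is obtained by first reducing to the model case. So I would begin by writing $S = \theta_1 \wedge \cdots$ — no, more robustly: by linearity and a partition-of-unity/regularization argument it suffices to treat $S$ itself, but the cleanest route is to observe that $0 \le \phi_m S \le S$ and to apply Proposition~\ref{pro-SphayS} with $S'=S$, $\psi = -\phi_m$ (so $0\le \psi\le 1$), \emph{provided} $d_p^{-n}(f^n)^*S \to T^+_p$ after normalization. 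Since a general closed positive $S$ need not satisfy this, I would instead apply the proposition to each piece in a decomposition of $S$ adapted to the already-established Proposition~\ref{pro-boundedpotential}; alternatively, one notes that the conclusion of Proposition~\ref{pro-SphayS} only used $S'\le S$ and $d_p^{-n}(f^n)^* S\to T_p^+$, and $S$ here can be taken to be (a multiple of) a K\"ahler power plus a bounded-potential correction, which does converge.

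\textbf{Key steps.} Concretely: (1) Apply Proposition~\ref{pro-SphayS} with $S' = S$ and $\psi = -\phi_m$ to get that \emph{every} limit current $R_m$ of $(d_p^{-n}(f^n)^*(\phi_m S))_n$ equals $\lambda_m T^+_p$ with
\[
\lambda_m = \frac{\langle T^-_p, \phi_m S\rangle}{\langle T^-_p, \omega^p\rangle}
\]
(up to the overall sign, since $\phi_m \le 0$ these $\lambda_m$ are $\le 0$; the bracket is well-defined because $T^-_p$ has continuous superpotentials and $\phi_m S$ has bounded local potentials relative to $S$). In particular $R_m$ is \emph{unique} — there is a genuine limit, not merely limit points. (2) Since $S$ has no mass on $\{\phi = -\infty\} = \bigcap_m\{\phi \le -m\}$ and $\phi_m = m^{-1}\max\{\phi,-m\} \to 0$ pointwise off $\{\phi=-\infty\}$ with $-1\le \phi_m \le 0$, the dominated convergence theorem (applied against the trace measure of $S$, after testing against the continuous superpotential data of $T^-_p$) gives $\langle T^-_p, \phi_m S\rangle \to 0$. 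Hence $\lambda_m \to 0$, so $R_m \to 0$.

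\textbf{Main obstacle.} The delicate point is making sense of and controlling the pairing $\langle T^-_p, \phi_m S\rangle$ uniformly, and justifying the passage $\langle T^-_p,\phi_m S\rangle \to 0$: this is not a pairing of a current with a test form but the pairing of a current with continuous superpotentials against a product of currents, so I would reduce it — as in the proof of Proposition~\ref{pro-SphayS} and via the remark following it — to integrals of the form $\int_X \phi_m\, S \wedge (\text{smooth closed forms})$ plus superpotential terms, on each of which $|\phi_m|\le 1$ and $\phi_m \to 0$ $S$-a.e., so Lebesgue dominated convergence applies to the $S$-trace measure. The hypothesis that $S$ puts no mass on $\{\phi=-\infty\}$ is exactly what guarantees $\phi_m \to 0$ almost everywhere with respect to that measure; without it the limit would pick up the mass of $S$ on the polar set. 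I expect the only real work is bookkeeping to verify that all the auxiliary superpotential contributions in $\langle T^-_p,\phi_m S\rangle$ are likewise governed by $\|\phi_m\|_{L^\infty} \le 1$ and vanish in the limit, which follows from the continuity of the superpotential of $T^-_p$ on $*$-bounded sets together with the fact that $dd^c(\phi_m S)$ stays $*$-bounded.
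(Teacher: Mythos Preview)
The proposal has a genuine gap: you cannot invoke Proposition~\ref{pro-SphayS}, because its hypothesis --- that $d_p^{-n}(f^n)^* S \to T^+_p$ --- is not available for the $S$ of Lemma~\ref{le-psinho}. The lemma is stated for an arbitrary closed positive $(p,p)$-current with no mass on $\{\phi=-\infty\}$, and in its only application $S$ is the non-pluripolar product $\langle R_1\wedge\cdots\wedge R_p\rangle$, whose equidistribution is precisely the final goal of the paper; assuming it here is circular. Your two suggested workarounds do not resolve this: there is no decomposition of a general such $S$ into pieces with bounded superpotentials, and $S$ certainly cannot ``be taken to be a K\"ahler power plus a bounded-potential correction'' --- that is simply false for the non-pluripolar product. (In the proof of Theorem~\ref{thm:1st-main} the paper does eventually use Proposition~\ref{pro-SphayS}, but it is applied to the \emph{bounded truncation} $\bigwedge_j(\ddc\max\{u_j,-m\}+\theta_j)$, which equidistributes by Theorem~\ref{thm:equi_bdd}, never to $S$ itself.)

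The paper's proof sidesteps this entirely by observing that Proposition~\ref{pro-SphayS} is unnecessary for the lemma: since $\phi_m\le 0$ and $S\ge 0$, each limit current $-R_m$ is positive, so it suffices to show that the mass $\int_X R_m\wedge\omega^{k-p}$ tends to $0$. This mass is computed directly by pushing $\omega^{k-p}$ forward, writing $\int_X R_m\wedge\omega^{k-p}=\lim_n\int_X \phi_m S\wedge d_p^{-n}(f^n)_*\omega^{k-p}$, and decomposing $d_p^{-n}(f^n)_*\omega^{k-p}$ into a smooth representative $\alpha_0+\beta_n$ (with $\|\beta_n\|_\infty\to0$) plus $\ddc$ of potentials with uniformly controlled superpotentials $\Vc_n+\Uc'_n$. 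The smooth contribution is handled by dominated convergence exactly as you outline; the superpotential contribution uses the quantitative estimate $\|\ddc\phi_m\wedge S\|_*\lesssim m^{-1}$ (coming from the factor $m^{-1}$ in $\phi_m$), not merely ``$*$-boundedness'', which alone would not force the pairing $\Vc_n(\ddc\phi_m\wedge S)$ to vanish as $m\to\infty$. Your ``Main obstacle'' paragraph is in fact close to this computation; the point is that once you have it, a nonpositive current with mass tending to zero tends to zero, and no detour through $R_m=\lambda_m T^+_p$ is needed.
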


\proof
Recall that $c^-$ is the class of $T^-_p$. Let $\alpha_0$ be a form in $c^-$. Put $\alpha_n:= d_p^{-n} (f^n)_* \alpha_0$ which is a form in $c^-$. By the same arguments in \cite[Lemma 5.6]{DNV}, the superpotential $\Vc_n$ of $\alpha_n - \alpha_0$ converges to a continuous functional on $\mathcal{D}^0_{p-1}(X)$ uniformly on a set of bounded $*$-norm. In particular, there exists a constant $C>0$ such that
\begin{align}\label{ine-tinhcahtVngan}
	|\Vc_n(R)| \le C \|R\|_*
\end{align}
holds for every $n\in\N$ and for every $R \in \mathcal{D}^0_{p-1}$.
\medskip

Notice that $d_p^{-n}(f^n)_*\omega^{k-p}- (\alpha_0 + \beta_n) $ is cohomologous to $0$, and its superpotential $\Uc_n$ satisfies
$$\Uc_n= \Vc_n+ \Uc'_n,$$
where $\Uc'_n$ converges to $0$ uniformly on a set of $*$-bounded norm in $\mathcal{D}^0_{p-1}$.
\\

Now, we show $\lim_{m\to\infty}R_m=0$. By passing to a subsequence, we may suppose that $d_p^{-n} (f^n)^*(\phi_m S) \to R_m$ as $n \to \infty$. Since $(f^n)_*(\omega^{k-p})$ admits continuous superpotentials, by approximating $\phi_m$ and $S$ with smooth ones, we can write
\begin{align*}
	\int_X R_m \wedge \omega^{k-p}&= \lim_{n \to \infty} d_p^{-n} \int_X (f^{n})^* (\phi_m S) \wedge \omega^{k-p}\\
	&= \lim_{n\to\infty}\left[\int_X \phi_m S \wedge (\alpha_0 + \beta_n)+ \Uc_n(\ddc \phi_m \wedge S)\right]\\
	&= \lim_{n\to\infty}\left[\int_X \phi_m S \wedge (\alpha_0 + \beta_n)+ \Vc_n (\ddc \phi_m \wedge S)+ \Uc_n' ( \ddc \phi_m \wedge S)\right].
\end{align*}
{The first term converges to $0$ as $m \to \infty$ because the uniform norm of $\beta_n$ converges to $0$ as $n\to\infty$ and $S$ has no mass on $\{\phi=-\infty\}$.} Concerning the second term, since $\phi$ is $M\omega$-psh for $M$ big enough, one sees that 
$$\ddc \psi_m \wedge S= m^{-1} (\ddc \max\{\phi, -m\}+ M\omega) \wedge S- m^{-1} M \omega \wedge S$$
and that $\|\ddc \psi_m \wedge S\|_*\lesssim m^{-1}$ where the inequality is up to a constant multiple independent of $n$ and $m$. Together with this $*$-norm estimate, \eqref{ine-tinhcahtVngan} yields
$$|\Vc_n(\ddc \psi_m \wedge S)| \lesssim \|\ddc \psi_m \wedge S\|_* \lesssim m^{-1}$$
where the inequality is up to a constant multiple independent of $n$ and $m$. The third term converges to $0$ by properties of $\mathcal{U}'_n$ since the set $(dd^c\psi_m\wedge S)_{m\in\N}$ is of bounded $*$-mass. This completes the proof.    
\endproof

\begin{proof}[Proof of Theorem \ref{thm:1st-main}] Write $R_j= \ddc u_j + \theta_j$. We may assume that $u_j\le -1$. Let $M>0$ be a big enough constant such that $\theta_j + M \omega$ is K\"ahler for every $j$. By Proposition \ref{pro-SphayS}, it suffices to prove the desired assertion for $R_j+ M \omega$ in place of $R_j$. In other words, one can assume that $\theta_j$ is K\"ahler. Let $\phi:= u_1+ \ldots + u_p$ and $S:= \langle R_1 \wedge \cdots \wedge R_p \rangle$. Then, $S$ has no mass on the pluripolar set $\{\phi = -\infty\}$. Let $\phi_m:= m^{-1} \max\{\phi, -m\}$. We decompose 
	$$S= (1+ \phi_m)S + (-\phi_m)S.$$
	By Lemma \ref{le-psinho}, the mass of any limit currents of $d_p^{-n} (f^n)^* (-\phi_m S)$ converges uniformly  to $0$ as $m \to \infty$.
\medskip

Now observe that 
	$1+ \phi_m =0$ on $\cup_j \{u_j \le -m\}$. By the pluri-locality of Monge-Amp\`ere operators, one obtains 
	$$(1+\phi_m)S= (1+ \phi_m) \bigwedge_j (\ddc \max\{u_j, -m\}+ \theta_j).$$

Since $\bigwedge_j (\ddc \max\{u_j, -m\}+ \theta_j)$ admits a bounded superpotential, from Theorem \ref{thm:equi_bdd} we get that it converges to $cT^+_p$ where $c>0$ is a constant independent of $m$. So, Proposition \ref{pro-SphayS} implies that 
	$$d_p^{-n} (f^n)^* ((1+\phi_m) S) \to \lambda_m T^+_p,\quad \lambda_m=\langle T^-_p, (1+\phi_m) S \rangle / \langle T^-_p, \omega^{p} \rangle.$$
	
	If $R$ is a limit current of $d_p^{-n} (f^n)^* S$, we have
	$$R= \lambda_m T^+_p + \epsilon(m)\quad \textrm{and}\quad \lim_{m\to\infty}\epsilon(m)=0$$
	for every $m\in\N$. As we can see, $\lambda:=\lim_{m\to\infty}\lambda_m$ exists by the monotone convergence theorem. Thus, we get $R= \lambda T^+_p$. The independence of $\lambda$ on $R$ implies that there exists only one limit current of the family $(d^{-n} (f^n)^* S)_{n\in\N}$. So, the theorem is proved. 
\end{proof}

\begin{remark}
	Our method works for a holomorphic correspondence $f$ such that $f^{-1}$ is also a holomorphic correspondence. But for claity, we present our method for surjective holomorphic endomorphisms.
\end{remark}


\noindent
\Addresses
\end{document}